\documentclass[11pt,a4paper,twoside]{article}
\usepackage{bm, amsmath, amssymb, amsthm}
\usepackage{graphicx}

\topmargin=-18 true mm
\oddsidemargin=-2 true mm
\evensidemargin=-2 true mm
\setlength{\textheight}{256 true mm}  
\setlength{\textwidth}{164 true mm}

\def\<{\langle}
\def\>{\rangle}
\def\eps{\varepsilon}

\def\RR{\mathbb{R}}
\def\vbeta{{\bm b}}

\newcommand\tr{\operatorname{Tr}}

\newcommand\id{ \operatorname{id}}

\def\vol{\operatorname{vol}}

\def\eq{\hspace*{-2.0mm}&=&\hspace*{-2.0mm}}
\def\plus{\hspace*{-2.0mm}&+&\hspace*{-2.0mm}}
\def\minus{\hspace*{-2.0mm}&-&\hspace*{-2.0mm}}


\newtheorem{corollary}{Corollary}

\newtheorem{definition}{Definition}
\newtheorem{example}{Example}
\newtheorem{remark}{Remark}
\newtheorem{lem}{Lemma}
\newtheorem{proposition}{Proposition}
\newtheorem{thm}{Theorem}

\author{Vladimir Rovenski\footnote{Department of Mathematics, University of Haifa,
        \ e-mail: {\tt vrovenski@univ.haifa.ac.il}}
        \ and \
        Pawe\l \  Walczak\footnote{Katedra Geometrii,
        Uniwersytet \L\'{o}dzki,
\ e-mail: {\tt pawel.walczak@wmii.uni.lodz.pl}}
}

\title{Deforming convex bodies in Minkowski geometry}

\begin{document}

\date{}

\maketitle

\begin{abstract}
We introduce and study deformation $T_{\vbeta,\phi}$ of Minkowski norms in $\RR^n$, determined by a set
$\vbeta=(\beta_1,\ldots,\beta_p)$ of linearly independent 1-forms and a smooth positive function $\phi$ of $p$ variables.
In~particular, the $T_{\vbeta,\phi}$-image of a Euclidean norm $\alpha$ is a Minkowski norm,
whose indicatrix is a rotation hypersurface with a $p$-dimensional axis passing through the origin.
For~$p=1$, our deformation genera\-lizes construction of $(\alpha,\beta)$-norm;
the last ones form a rich class of ``computable" Minkowski norms and play an important role in Finsler geometry.
We~use compositions of $T_{\vbeta,\phi}$-deformations with $\vbeta$'s of length $p$ to define
an equiva\-lence relation $\overset{p}\sim$ on the set of all Minkowski norms in $\RR^n$.
We apply M. Matsumoto result to characterize the cases when the Cartan torsions of a norm and its $T_{\vbeta,\phi}$-image
either coincide or differ by a C-reducible~term.

\vskip1.5mm\noindent
\textbf{Keywords}: Convex body, Minkowski norm, $(\alpha,\vbeta)$-norm, 1-form, Cartan torsion, C-reducible.

\vskip1.5mm\noindent
\textbf{Mathematics Subject Classifications (2010)} Primary	51B20;  Secondary 58B20
\end{abstract}

\section*{Introduction}

\smallskip

In the paper we introduce and study deformations $T_{\vbeta,\phi}$ of Minkowski norms in $\RR^n$, determined by a sequence $\vbeta=(\beta_1,\ldots,\beta_p)$ of linearly independent 1-forms $\beta_i$ and a positive function $\phi$ of $p$ variables.
In particular, $T_{\vbeta,\phi}(\alpha)$ (the images of Euclidean norm $\alpha$) are $(\alpha,\vbeta)$-norms \cite{r-fin3},
whose indicatrix is a rotation hypersurface with a $p$-dimensional axis passing through the origin.
The $(\alpha,\beta)$-norms have also recently been generalized in \cite{js1} as homogeneous combinations of several Minkowski
norms and one-forms.
For~$p=1$, our deformations $T_{\beta,\phi}$ genera\-lize construction \cite{shsh2016} of $(\alpha,\beta)$-norms,
which form a rich class of ``computable" Minkowski norms and play an important role in differential geometry.
We expect that our $T_{\vbeta,\phi}$-deformations will also find many applications in Minkowski Geometry \cite{thom} as well as in Finsler Geometry \cite{shsh2016}.
Our question is about mapping between two pairs $(B_i, q_i)\ (i=1,2)$, $B_i$ being a convex body and $q_i$ -- a point of the interior of~$B_i$.
\textit{When $(B_1, q_1)$ can be $T_{\vbeta,\phi}$-mapped by one-step or a sequence of deformations onto $(B_2, q_2)$}?

More exactly, we pose the following
\textbf{Problem}. Consider the space $Cpt(\RR^n)\  (n\ge2)$ of compact pointed subsets of $\RR^n$ equipped with the Hausdorff distance $d_H$. Given two compact convex bodies $B$ (e.g. a ball) and $B'$ and points ${\cal O}\in B$ and ${\cal O}'\in B'$ in $\RR^n$, can one approximate (in $Cpt(\RR^n)$) $B'$ by the unit ball $\bar B=\{\bar F=1\}$ of a Minkowski norm $\bar F$ having origin $\bar{\cal O}$ close to ${\cal O}'$ and being equivalent (in the sense of Definition~\ref{Def-rel-p} with either $p =1$ or arbitrary $p$) to the norm $F$ of origin ${\cal O}$ and $B=\{ F = 1\}$. That is, given $\epsilon>0$, can one find $\bar F$ as above for which $d_H(B',\bar B)<\epsilon$ and $\max\{F({\cal O}'-\bar{\cal O}), F(\bar{\cal O}-{\cal O}')\}<\epsilon$. If not, can one do something like that with the Gromov-Hausdorff distance $d_{GH}$ (see \cite{gr}) replacing~$d_H$?

In the paper, we define
(using compositions of $T_{\vbeta,\phi}$-deformations with $\vbeta$'s of length $\le p$)
and study an equiva\-lence relation $\overset{p}\sim$ on the set of all Minkowski norms in $\RR^n$.
We show that any axisymmetric convex body $B$ in $\RR^2$ can be moved to a unit disc by $T_{\beta,\phi}$-deformation,
but {we have no example of Minkowski norm in $\RR^2$ nonequivalent $\overset{1}\sim$ to Euclidean norm}.
We~prove that any ellipsoid in $\RR^3$ can be deformed to a sphere in a finite number of steps with $T_{\beta,\phi}$-deformations,
but {we cannot say the same for a general convex body in $\RR^3$ with a plane of symmetry}.
Answers to the above questions will require a thorough study of Cartan torsion.
In the paper, we characterize the cases when the Cartan torsions of the Minkowski norm $F$ and its image
$\bar F=T_{\vbeta,\phi}(F)$ coincide or their difference is a C-reducible~term.

\section{Construction}

Recall that a \textit{Minkowski norm} on a vector space $\RR^{n}\ (n>1)$ is a function $F:\RR^n\to[0,\infty)$ with the  properties of regularity, positive 1-homogeneity and strong convexity, see~\cite{shsh2016}:

M$_1:$ $F\in C^\infty(\RR^n\setminus \{0\})$,

M$_2:$ $F(\lambda\,y)=\lambda F(y)$ for $\lambda>0$ and $y\in \RR^n$,

M$_3:$ For any $y\in \RR^n\setminus \{0\}$, the following symmetric bilinear form is {positive definite}:
\begin{equation}\label{E-acsiom-M3}
 g_y(u,v)=\frac12\,\frac{\partial^2}{\partial s\,\partial t}\,\big[F^2(y+su+tv)\big]_{|\,s=t=0}\,,\quad u,v\in\RR^n.
\end{equation}

\noindent

By M$_2$--\,M$_3$, $g_{\lambda y}=g_{y}\ (\lambda>0)$ and $g_y(y,y)=F^2(y)$.
As a result of M$_3$, the indicatrix $S:=\{y\in \RR^n: F(y)=1\}$ is a closed,
convex smooth hypersurface that surrounds the origin.

\begin{remark}\label{R-1.1new}\rm
In some interesting cases (when $F$ is not positive definite or smoothly inextendible in some directions,
so that these directions must be removed from the domain of $F$)
Minkowski norms are defined only in conic domains $A\subset\RR^n$, e.g., \cite{js1}.
Such cases called \textit{pseudo-Minkowski} or \textit{conic Minkowski norms}, are considered below only in examples
(e.g., slope or Kropina norms).
\end{remark}

The following symmetric trilinear form is called the \textit{Cartan~torsion} for $F$:
\begin{equation}\label{E-Ctorsion}
 C_y(u,v,w)=\frac14\,\frac{\partial^3}{\partial r\,\partial s\,\partial t}\,\big[F^2(y+ru+sv+tw)\big]_{|\,r=s=t=0}\,,\quad u,v,w\in\RR^n,
 \end{equation}
where
$y\ne0$.
Note that $C_y(u,v,y)=0 $ and $C_{\lambda y}=\lambda^{-1}C_y$ for $\lambda>0$\,.
A 1-form
 $I_y(u)= \tr_{g_y} C_y(u,\,\cdot\,,\cdot)$,
is called the \textit{mean Cartan torsion}, its vanishing characterizes Euclidean norm among all Minkowski norms, e.g. \cite{shsh2016}.
In coordinates, we have
\[
 C_{ijk}=\frac14\,[F^2]_{y^iy^jy^k} = \frac12\,\frac{\partial g_{ij}}{\partial y^k},\quad
 g_{ij} = \frac12\,[F^2]_{y^iy^j},\quad
 C_{k} = C_{ijk} g^{ij},
\]
where $C_k$ are components of the mean Cartan torsion.
The \textit{angular metric tensor of $F$},
\[
 K_y(u,v)=g_y(u,v)-g_y(y,u)\,g_y(y,v)/F^2(y),
\]
in coordinates has the view $K_{ij}=F\cdot F_{y^i y^j} = g_{ij} - g_{ip}\,y^p g_{iq}\,y^q/F^2$.

A Minkowski norm $F$ is called \textit{semi-C-reducible}, if its Cartan torsion has the form \cite{shsh2016}
\begin{equation}\label{E-semi-C-r}
 C_{ijk} = \frac{p}{n+1}
 \big(K_{ij}C_k+K_{jk}C_i+K_{ki}C_j\big) +\eps\frac{1-p}{C^2}\,C_iC_jC_k,
\end{equation}
where $p\in\RR$,  and
\begin{equation*}
 \eps(C)^2=g^{ij}C_iC_j\ne0.
\end{equation*}
$F$ in $\RR^n\ (n\ge3)$ is called \textit{C-reducible}, if its Cartan torsion
has the form of \eqref{E-semi-C-r} with $p=1$,
\[
 C_{ijk} = \frac{1}{n+1} \big(K_{ij}C_k+K_{jk}C_i+K_{ki}C_j\big) .
\]
By \cite[Proposition~5]{ma92}, any $(\alpha,\beta)$-norm,
 $F = \alpha\,\phi(\beta/\alpha)$,
 with nonzero mean Cartan torsion is semi-C-reducible.
In dimension greater than two,
any C-reducible Minkowski
norm has the Randers ($\phi=1+s$) or the Kropina ($\phi=1/s$) type, see \cite[Theorem 2.2 (M.~Matsumoto)]{shsh2016} and Remark~\ref{R-RanKrop}.

\subsection{General case}

Here, we define a deformation of Minkowski norms by using a sequence of $p$ linearly independent 1-forms
in $\RR^n$ and a positive function of $p$ variables.

\begin{definition}\label{D-01}\rm
Let $F$ be a Minkowski norm on $\RR^n$,
$\phi:\prod_{\,i=1}^{\,p}[-\delta_{i},\delta_{i}]\to(0,\infty)$ a smooth positive function
for some $p\le n$ and $\vbeta=(\beta_1,\ldots,\beta_p)$ a sequence of
linearly independent 1-forms on $\RR^n$ of the norm $F(\beta_i)<\delta_i$.
Then, the $T_{\vbeta,\phi}$-\textit{deformation} of $F$
(or, of convex body defined by $\{F\le1\}$) is the following mapping:
\begin{equation}\label{E-ab-def}
 T_{\vbeta,\phi}: F \mapsto F\cdot\phi(s),\quad s=(s_1,\ldots,s_p),\quad s_i=\beta_i/F.
\end{equation}
\end{definition}

The image $T_{\vbeta,\phi}(\alpha)$ of a Euclidean norm $\alpha$ is called $(\alpha,\vbeta)$-\textit{norm}, see \cite{r-fin3},
its indicatrix is a rotation hypersurface in $\RR^n$
with the $p$-dimensional axis span$\{\beta_1^\sharp,\ldots,\beta_p^\sharp\}$.
For $p=1$ (and $\beta_1=\beta$),~\eqref{E-ab-def} defines the $T_{\beta,\phi}$-transformation, see Section~\ref{sec:p-1a} and Example~\ref{Ex-01} below.

\begin{remark}\rm
Our norm \eqref{E-ab-def} can be viewed as a canonical form of so called $(F,\vbeta)$-norm, defined by
 $\bar F = \sqrt{L(F,\beta_{1},\ldots,\beta_{p})}$,
where $L:\RR^{p+1}\to\RR$ is a continuous function with the following properties:
a)~$L$ is smooth and positive away from 0,
b)~$L$ is positively homogeneous of degree 2, i.e., $L(\lambda\,y) = \lambda^2\,L(y)$ for all $\lambda>0$.
Indeed, set $\phi(s_1,\ldots,s_p)=\sqrt{L(1,s_{1},\ldots,s_{p})}$, then $\bar F = F\,\phi(\beta_1/F,\ldots,\beta_p/F)$.
For $p=1$, such combinations were named $\beta$-changes in \cite{sh1984} and $(F,\beta)$-norms in \cite[p.~845]{js1}.
\end{remark}

 By direct calculation we~get

\begin{proposition}\label{prop:compos}
The composition
$T_{\vbeta,\phi_2}\circ\,T_{\vbeta,\phi_1}$ with the same $\vbeta$
has the form $T_{\vbeta,\phi}$, where
\[
 \phi=\phi_1(s)\,\phi_2(s/\phi_1(s)).
\]
\end{proposition}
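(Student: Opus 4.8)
The plan is to unwind the two definitions in turn, keeping track of how the slope variables $s_i=\beta_i/F$ transform under the first deformation. First I would set $F_1:=T_{\vbeta,\phi_1}(F)=F\cdot\phi_1(s)$ with $s=(s_1,\dots,s_p)$, $s_i=\beta_i/F$. Since $\phi_1>0$ and $\phi_1(s)$ depends on $y$ only through the degree-$0$ homogeneous ratios $\beta_i/F$, the factor $\phi_1(s)$ is a positive function homogeneous of degree $0$; hence $F_1$ is positive and positively $1$-homogeneous (property M$_2$), so that forming $T_{\vbeta,\phi_2}(F_1)$ is meaningful once the domain conditions of Definition~\ref{D-01} are satisfied (see below).

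Next I would compute the slope variables attached to $F_1$ with the \emph{same} $\vbeta$, namely $s'_i:=\beta_i/F_1$. Writing $\beta_i=s_iF$ and $F_1=F\,\phi_1(s)$ gives $s'_i=\beta_i/F_1=s_i/\phi_1(s)$, i.e. $s'=s/\phi_1(s)$, the division being by the scalar $\phi_1(s)$. Substituting, $T_{\vbeta,\phi_2}(F_1)=F_1\cdot\phi_2(s')=F\cdot\phi_1(s)\cdot\phi_2\!\big(s/\phi_1(s)\big)=F\cdot\phi(s)$ with $\phi(s)=\phi_1(s)\,\phi_2(s/\phi_1(s))$, which is exactly the asserted formula, and the result is $T_{\vbeta,\phi}(F)$ with $\vbeta$ unchanged.

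There is no genuine obstacle in the algebra; the only point deserving care is to check that $\phi$ is an admissible multiplier in the sense of Definition~\ref{D-01}. Positivity of $\phi$ is immediate from $\phi_1,\phi_2>0$. For smoothness, note that $\phi_1$ is continuous and positive on the compact box $\prod_i[-\delta_i,\delta_i]$, hence bounded below by some $c>0$ there; therefore $s\mapsto s/\phi_1(s)$ is smooth and carries a (possibly smaller) box around the origin into the domain of $\phi_2$, and the chain rule yields $\phi\in C^\infty$. One should also verify that $F_1$ meets a bound of the type $F_1(\beta_i)<\delta_i$ needed to apply $T_{\vbeta,\phi_2}$, which is handled by shrinking the $\delta_i$'s if necessary. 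Strong convexity M$_3$ of the images plays no role in the computation — the proposition is a purely formal identity about how the multipliers compose — but it is worth remarking that it is part of the standing hypotheses on the norms under consideration.
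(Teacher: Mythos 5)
Your proposal is correct and is precisely the direct calculation the paper alludes to with ``By direct calculation we get'': writing $F_1=F\phi_1(s)$ and observing that the new slope variables are $\beta_i/F_1=s_i/\phi_1(s)$ immediately gives $\phi=\phi_1(s)\,\phi_2(s/\phi_1(s))$. The extra remarks on positivity, smoothness and shrinking the $\delta_i$'s are sensible housekeeping but do not change the argument.
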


Define real functions
of variables $(s_1,\ldots,s_p)$:
\begin{equation}\label{E-n-rhos}
 \rho = \phi\big(\phi-\sum\nolimits_{\,i}s_i\dot\phi_i\big),\quad
 \rho_{0}^{ij} = \phi\,\ddot\phi_{ij}+\dot\phi_i\dot\phi_j,\quad
 \rho_{1}^{i} = \phi\,\dot\phi_i -\sum\nolimits_{\,j} s_j\big(\phi\,\ddot\phi_{ij}+\dot\phi_i\,\dot\phi_j\big),
\end{equation}
where $\dot\phi_i=\frac{\partial\phi}{\partial s_i}$,
$\ddot\phi_{ij}=\frac{\partial^2\phi}{\partial s_i\partial s_j}$, etc.
Note that the following relations hold:
\[
 \dot\rho_i = \rho_1^i,\quad
 \ddot\rho_{ij} = (\rho_1^i)'_j = -s_k(\rho_0^{ik})'_j,
\]
and if $s_i=0\ (1\le i\le p)$ then $\rho=1$, $\rho_{1}^{i}=0$ and $\rho_{0}^{ij}=0$.

\begin{example}\label{Ex-01}\rm
If $F$ is the Euclidean norm $\alpha(y)={\<y,y\>}^{1/2}$
then
$T_{\beta,\phi}(\alpha)$ is
an $(\alpha,\beta)$-\textit{norm}, see~\cite{shsh2016}.
Some progress was achieved for particular cases of $(\alpha,\beta)$-norms, e.g.
Randers norm $\alpha+\beta$, introduced by a physicist G.\,Randers to study the unified field theory;
Kropina norm $\alpha^{2}/\beta$,
first introduced by L.\,Berwald in connection with a Finsler plane with rectilinear extremal, and investigated by V.K.\,Kropina;
slope norm $\frac{\alpha^2}{\alpha-\beta}$,
introduced by M.~Matsumoto to study the time it takes to negotiate any given path on a hillside.
These can be viewed as images of $\alpha$ under $T_{\beta,\phi}$-deformations (for $p=1$).
We~define similarly particular $T_{\beta,\phi}$-deformations of Minkowski norms in~$\RR^n$.

(i)~The \textit{Randers deformation} appears for $\bar F=F+\beta$ with $F(\beta)<1$, i.e., $\phi(s)=1+s$.
We~have $\rho=1+s,\ \rho_0=\rho_1=1$.

(ii)~\textit{Generalized Kropina deformations} appear for $\bar F = F^{l+1}/\beta^l\ (l>0)$, i.e., $\phi(s) = 1/s^l\ (s>0)$.
For $l=1$ we get the~\textit{Kropina deformation}.
Then $\rho=2/s^2$, $\rho_0=3/s^4$,~$\rho_1=-4/s^3$.

(iii)~The \textit{slope-deformation} appears for $\bar F = \frac{F^2}{F-\beta}$ with $F(\beta)<1$, i.e., $\phi(s) = \frac1{1-s}$.
We~have $\rho=\frac{1-2s}{(1-s)^3},\ \rho_0=\frac{3}{(1-s)^4}$ and $\rho_1=\frac{1-4s}{(1-s)^4}$.

(iv) The~\textit{quad\-ratic deformation} appears for $\bar F=(F+\beta)^2/F$ with $F(\beta)<1$, i.e., $\phi(s)=(1+s)^2$.
We have $\rho=(1-s)(1+s)^3,\ \rho_0=6(1+s)^2$ and $\rho_1= 2(1-2s)(1+s)^2$.

Note that Kropina and slope norms are not Minkowski norms: their $F$'s are not defined on the whole $\RR^n\setminus\{0\}$
(have singularities) and their $g_y$'s are not positive definite.
Figure~\ref{F-deform} shows quadratic deformation
of indicatrix of $m$-root norm $F=((y^1)^m+(y^2)^m)^{1/m}$ in $\RR^2$ for $m=2,\ldots,8$ and two cases: a)~$0.3\,dy^2$, and b)~$0.3 (dy^1+dy^2)$ of $\beta$.
\end{example}

\begin{figure}[ht]
\begin{center}
\includegraphics[scale=.5,angle=0]{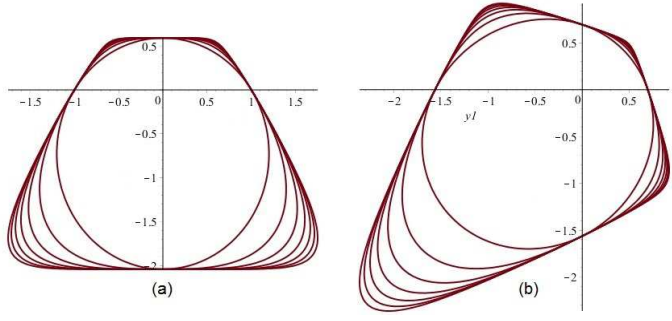}
\end{center}
\caption{Indicatrix of $m$-root norm in $\RR^2$ after quadratic deformation.\label{F-deform}}
\end{figure}

Assume in the paper that $\rho>0$, i.e.,
\begin{equation}\label{E-p-cond1}
 \phi-\sum\nolimits_{\,i}s_i\dot\phi_i>0,
\end{equation}
otherwise metric in \eqref{E-c-value0} is not positive
for small $s_i$.
Set $\tilde p_y=\rho_1^i p_{yi}$, where $p_{yi}= \beta_i^{\sharp y}-s_i y/F(y)$.

\begin{thm}
Let $\bar F=T_{\vbeta,\phi}(F)$, then the bilinear forms, see \eqref{E-acsiom-M3}, are related as
\begin{eqnarray}\label{E-c-value0}
\nonumber
  \bar g_y(u,v) \eq \rho\,g_y(u,v) +\rho_{0}^{ij}\beta_i(u)\beta_j(v) \\
 \plus\rho_{1}^{i}(\beta_i(u) g_y(y,v) +\beta_i(v) g_y(y,u))/F(y) -\rho_{1}^{i}\beta_i(y) g_y(y,u)\,g_y(y,v)/F^{3}(y).
\end{eqnarray}
The Cartan torsions of $\bar F$ and $F$, see \eqref{E-Ctorsion}, are related as
\begin{eqnarray}\label{E-Ctorsion2}
\nonumber
 && 2\,\bar C_y(u,v,w) = 2\,\rho\,C_y(u,v,w) \\
\nonumber
 && +\,\big( K_y(u,v) g_y(\tilde p_{y},w) +K_y(v,w) g_y(\tilde p_{y},u) +K_y(w,u) g_y(\tilde p_{y},v)\big)/F(y) \\
 && +\sum\nolimits_{\,i,j,k}(\dot\phi_i\ddot\phi_{jk}+\dot\phi_j\ddot\phi_{ik}
 +\dot\phi_k\ddot\phi_{ij}
 +\phi\,\dddot\phi_{ijk})\,g_y(p_{yi},u)\,g_y(p_{yj},v)\,g_y(p_{yk},w)/F(y).
\end{eqnarray}
\end{thm}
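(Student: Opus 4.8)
The plan is to compute everything directly from the definitions by differentiating $\bar F^2 = F^2\,\phi(s)^2$ with $s_i = \beta_i/F$, treating the $\beta_i$ as linear functions (so $\partial_{y^k}\beta_i(y) = \beta_i(\partial_{y^k})$, which I will abbreviate $\beta_{i,k}$) and using the homogeneity relations for $F$. The key auxiliary object is $s_i = \beta_i/F$, whose first derivative is $\partial_{y^k} s_i = (\beta_{i,k} - s_i\,F_{y^k})/F = p_{yi}(\partial_{y^k})/F$ in the notation of the excerpt (since $F_{y^k} = g_y(y,\partial_{y^k})/F$ gives $p_{yi}=\beta_i^{\sharp y}-s_i y/F$). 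The second derivative of $s_i$ will involve $F_{y^ky^\ell} = K_{k\ell}/F$, which is where the angular metric tensor $K_y$ enters. So the two ingredients I will prepare first are: (1) the formulas $\partial_{y^k}s_i = p_{yi,k}/F$ and $\partial_{y^k}\partial_{y^\ell}s_i = -(s_i/F^2)K_{k\ell} - (\text{symmetrized } p\text{-terms})/F^2$, obtained by a further differentiation; and (2) the clean rewriting of $F_{y^k}$, $F_{y^ky^\ell}$, $(F^2)_{y^ky^\ell}=2g_{k\ell}$ in terms of $g_y$, $K_y$, $C_y$.

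Next I would establish \eqref{E-c-value0} by computing $\bar g_{k\ell} = \tfrac12 [\bar F^2]_{y^ky^\ell} = \tfrac12[F^2\phi^2]_{y^ky^\ell}$. Expanding by Leibniz, I get a sum of terms: $\phi^2 g_{k\ell}$; cross terms $[F^2]_{y^{(k}}[\phi^2]_{y^{\ell)}}$ proportional to $F F_{y^{(k}}\,\phi\dot\phi_i\,\partial_{y^{\ell)}}s_i$; and $F^2[\phi^2]_{y^ky^\ell}$, which via the chain rule produces $\phi^2\ddot\phi$-type and $\dot\phi\dot\phi$-type pieces times $\partial s\,\partial s$ and $\phi\dot\phi$ times $\partial^2 s$. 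Collecting the coefficients of $g_y(y,\cdot)g_y(y,\cdot)$, of $\beta_i(\cdot)g_y(y,\cdot)$, and of $\beta_i(\cdot)\beta_j(\cdot)$, and matching against the definitions \eqref{E-n-rhos} of $\rho,\rho_0^{ij},\rho_1^i$, should reproduce \eqref{E-c-value0} exactly; the relation $\dot\rho_i=\rho_1^i$ and the identity $\ddot\rho_{ij}=-s_k(\rho_0^{ik})'_j$ noted after \eqref{E-n-rhos} will be the bookkeeping facts that make the $\phi\dot\phi\,\partial^2 s$ contribution reorganize into the stated $\rho$- and $\rho_1$-coefficients.

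Then \eqref{E-Ctorsion2} follows by differentiating \eqref{E-c-value0} once more, using $\bar C_{k\ell m} = \tfrac12\,\partial_{y^m}\bar g_{k\ell}$. Differentiating $\rho\,g_{k\ell}$ gives $2\rho C_{k\ell m} + (\partial_{y^m}\rho)g_{k\ell}$, and $\partial_{y^m}\rho = \dot\rho_i\,\partial_{y^m}s_i = \rho_1^i p_{yi,m}/F$; combined with the analogous derivatives of the $\rho_1^i$-terms, the pieces proportional to $g_y$ reassemble into the $K_y(\cdot,\cdot)g_y(\tilde p_y,\cdot)/F$ terms (using $g_{k\ell} - g_y(y,\cdot)g_y(y,\cdot)/F^2 = K_{k\ell}/F\cdot F = K_{k\ell}$ and $\tilde p_y = \rho_1^i p_{yi}$). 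The remaining genuinely new contribution comes from $\partial_{y^m}$ hitting the $\rho_0^{ij}$-coefficient and the $\rho_1^i$-coefficients through their $s$-dependence; tracking the third $\phi$-derivatives produces the fully symmetric tensor $\dot\phi_i\ddot\phi_{jk}+\dot\phi_j\ddot\phi_{ik}+\dot\phi_k\ddot\phi_{ij}+\phi\,\dddot\phi_{ijk}$ contracted with three copies of $g_y(p_{y\bullet},\cdot)/\sqrt{F}$... more precisely with the overall $1/F$ shown. I would also verify the consistency check $\bar C_y(u,v,y)=0$ falls out, since $K_y(\cdot,y)=0$ and $p_{yi}$ paired through $g_y$ with $y$ contributes only via terms that cancel against $C_y(\cdot,\cdot,y)=0$.

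The main obstacle is purely organizational rather than conceptual: the second differentiation of $s_i=\beta_i/F$ and the triple Leibniz expansion generate a large number of terms that are not manifestly in the symmetric form displayed, and one must repeatedly use $C_y(u,v,y)=0$, $g_y(y,y)=F^2$, $F_{y^ky^\ell}=K_{k\ell}/F$, and the $\rho$-identities to force the collapse into the three symmetric groups. I expect the trickiest cancellation to be showing that all terms involving a bare $g_y(y,\cdot)$ (not packaged inside a $K_y$) cancel except for those that legitimately belong to the $K_y g_y(\tilde p_y,\cdot)$ structure — equivalently, checking the $y$-orthogonality of $\bar C_y$ is what pins down the coefficients. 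Everything else is careful but routine symbol-pushing.
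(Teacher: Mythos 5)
Your plan is correct and is essentially the computation the paper performs: both proofs amount to chain-rule differentiation of $\bar F^2=F^2\phi^2(\vbeta/F)$, with $\partial_{y^k}s_i=g_y(p_{yi},\partial_k)/F$ and $F_{y^ky^\ell}=K_{k\ell}/F$ supplying the $p_{yi}$- and $K_y$-structures, and the $\rho$-identities doing the bookkeeping. The only organizational difference is that for the third derivatives the paper works in the intermediate variables $(F,\beta_1,\ldots,\beta_p)$ and uses the $0$-homogeneity of $[\bar F^2/2]_{\mu\nu}$ to replace all higher $F$-derivatives by pure $\beta$-derivatives contracted with $p_{yi}$ (which is exactly the cancellation of the bare $g_y(y,\cdot)$ terms you anticipate having to verify by hand), so your route reaches the same formulas with somewhat more explicit symbol-pushing.
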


\proof
From \eqref{E-acsiom-M3} and \eqref{E-ab-def} we find
\begin{eqnarray}\label{En-g-K}
\nonumber
 &&\hskip-4mm \bar g_y(u,v) = {[\bar F^2/2]_F} K_y(u,v)/F(y) +{[\bar F^2/2]_{FF}}\,g_y(y,u)\,g_y(y,v)/F^{2}(y) \\
 &&\hskip-5mm +\sum\nolimits_{\,i}({[\bar F^2/2]_{F\beta_i}}/{F(y)})\big(g_y(y,u)\beta_i(v) +g_y(y,v)\beta_i(u)\big)
 {+}\sum\nolimits_{\,i,j}[\bar F^2/2]_{\beta_i\beta_j}\,\beta_i(u)\beta_j(v).
\end{eqnarray}
Calculating derivatives of
$\frac12\,\bar F^2=\frac12\,F^2\phi^2(\beta_1/F,\ldots,\beta_p/F)$,
\begin{eqnarray}\label{En-F2}
\nonumber
 &&
 [\bar F^2/2]_F={F}\rho,\quad
 [\bar F^2/2]_{\beta_i}=F\phi\,\dot\phi_i,\quad
 [\bar F^2/2]_{F\beta_i}=\rho_{1}^{i},\quad
 [\bar F^2/2]_{\beta_i\beta_j}=\rho_{0}^{ij},\\
 && [\bar F^2/2]_{FF} = \rho +(\sum\nolimits_{\,i}s_i\,\dot\phi_i)^2 +\phi\sum\nolimits_{\,i,j}s_i\,s_j\,\ddot\phi_{ij}
\end{eqnarray}
and comparing \eqref{E-c-value0} and \eqref{En-g-K}, completes the proof of \eqref{E-c-value0}.
Recall that if $H(z^1,\ldots, z^q)$ is a positively homogeneous of degree $r$ function then $H_{z^k}\,z^k=r H$.
 The 0-homogeneity of $[\bar F^2/2]_{\mu\,\nu}$ in variables $F,\{\beta_k\}$ yields
\[
 [\bar F^2/2]_{F\mu\,\nu}\,F + \sum\nolimits_{\,k}\,[\bar F^2/2]_{\beta_k\mu\,\nu}\,\beta_k=0
\]
for $\mu,\nu\in\{F,\beta_1,\ldots,\beta_p\}$; hence,
\begin{eqnarray*}
 && [\bar F^2/2]_{F\beta_i\beta_j} = -\sum\nolimits_{\,k}(\beta_k/F)\,[\bar F^2/2]_{\beta_i\beta_j\beta_k}, \\
 && [\bar F^2/2]_{FF\beta_i} = \sum\nolimits_{\,j,k}(\beta_j/F)(\beta_k/F)\,[\bar F^2/2]_{\beta_i\beta_j\beta_k}, \\
 && [\bar F^2/2]_{FFF} = -\sum\nolimits_{\,i,j,k}(\beta_i/F)(\beta_j/F)(\beta_k/F)\,[\bar F^2/2]_{\beta_i\beta_j\beta_k}.
\end{eqnarray*}
Using this, we calculate the Cartan torsion \eqref{E-Ctorsion} after $T_{\vbeta,\phi}\,$-deformation as
\begin{eqnarray}\label{E-Ctorsion2a}
\nonumber
 && 2\,\bar C_y(u,v,w) = 2\,\rho\,C_y(u,v,w) \\
\nonumber
 \plus \sum\nolimits_{\,i}[\bar F^2/2]_{F\beta_i}
 \big( K_y(u,v) g_y(p_{yi},w)+K_y(v,w) g_y(p_{yi},u) + K_y(w,u) g_y(p_{yi},v)\big)/F(y) \\
 \plus  \sum\nolimits_{\,i,j,k}[\bar F^2/2]_{\beta_i\beta_j\beta_k}\, g_y(p_{yi},u)\,g_y(p_{yj},v)\,g_y(p_{yk},w).
\end{eqnarray}
Next, using equalities \eqref{En-F2}, we obtain
\begin{equation*}
 [\bar F^2/2]_{\beta_i\beta_j\beta_k} = \big(\dot\phi_i\,\ddot\phi_{jk}+\dot\phi_j\,\ddot\phi_{ik}
 +\dot\phi_k\,\ddot\phi_{ij} +\phi\,\dddot\phi_{ijk}\big)/F(y).
\end{equation*}
The above, and comparing \eqref{E-Ctorsion2a} and \eqref{E-Ctorsion2} completes the proof of \eqref{E-Ctorsion2}.
\qed

\begin{remark}\rm
By \eqref{E-c-value0}, $\bar g_y$ of $T_{\vbeta,\phi}\,$-deformation $\bar F$ can be viewed as a perturbed metric $g_y$ of $F$.
The~second line term on the RHS of \eqref{E-Ctorsion2} is the symmetric product of the 2-tensor $K_y/F(y)$ and the 1-form $\iota_{\,\tilde{p}_y} g_y$ while the third line term is a linear combination of symmetric products of 1-forms $\iota_{\,p_{yi}}g_y\ (i=1,\ldots,p)$.  Therefore, the difference $\bar C_y - \rho\,C_y$ is semi-C-reducible when $p = 1$ (see also \cite{shsh2016}), but in general is not when  $p > 1$.
\end{remark}

\begin{proposition}\label{P-03}
Let $\phi$ be a smooth positive function of class C$^{\,2}$ defined in a neighborhood of the origin $O$ of $\RR^p$.
Then there exists $\delta>0$ such that \eqref{E-p-cond1} holds and
for any Minkowski norm $F$ and arbitrary $1$-forms $\beta_1, \ldots, \beta_p$ of $F$-norm less than $\delta$;
then \eqref{E-ab-def} determines the  Minkowski norm~$\bar F$.
\end{proposition}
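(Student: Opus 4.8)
The plan is to verify the three Minkowski axioms M$_1$--M$_3$ for $\bar F=F\cdot\phi(\beta_1/F,\ldots,\beta_p/F)$, with the smoothness of $\phi$ near the origin and the smallness of the forms $\beta_i$ doing the work. Axiom M$_1$ is immediate: on $\RR^n\setminus\{0\}$ the functions $s_i=\beta_i/F$ are smooth (since $F>0$ there by M$_3$ for $F$), and if $\max_i\|\beta_i\|_F<\delta$ with $\delta$ no larger than the radius of the ball on which $\phi$ is defined, then $(s_1,\ldots,s_p)$ stays in the domain of $\phi$ for every $y\ne 0$, because $|s_i(y)|=|\beta_i(y)|/F(y)\le \|\beta_i\|_F<\delta$. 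Hence $\bar F\in C^\infty(\RR^n\setminus\{0\})$, and $\bar F>0$ there since $\phi>0$. Axiom M$_2$ (positive $1$-homogeneity) is equally routine: $F$ is $1$-homogeneous and each $s_i$ is $0$-homogeneous, so $\bar F(\lambda y)=\lambda F(y)\,\phi(s(y))=\lambda\bar F(y)$ for $\lambda>0$.

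The substance is M$_3$, positive-definiteness of $\bar g_y$. Here I would use the explicit formula \eqref{E-c-value0} from the Theorem. Write $\bar g_y=\rho\,g_y+R_y$, where $\rho=\phi(\phi-\sum_i s_i\dot\phi_i)$ and $R_y$ collects the three $\rho_0$-, $\rho_1$-terms. First, the hypothesis \eqref{E-p-cond1}, i.e. $\phi-\sum_i s_i\dot\phi_i>0$, together with $\phi>0$, gives $\rho>0$ on the domain; and since $\phi-\sum s_i\dot\phi_i=1$ at $s=0$ and $\phi$ is $C^2$, by shrinking $\delta$ we may assume $\rho\ge\tfrac12$ (say) on $\prod_i[-\delta,\delta]$ — this simultaneously secures \eqref{E-p-cond1}. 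Thus $\rho\,g_y\succeq\tfrac12\,g_y$, a definite form. Next I bound the perturbation $R_y$. The key observation, already noted after \eqref{E-n-rhos}, is that at $s=0$ one has $\rho_0^{ij}=0$ and $\rho_1^i=0$; by continuity of the second derivatives of $\phi$ there is a modulus of continuity so that $|\rho_0^{ij}|,|\rho_1^i|\le C(\delta)$ with $C(\delta)\to 0$ as $\delta\to 0$, uniformly in $s\in\prod_i[-\delta,\delta]$. Each term of $R_y$ is then $O(\delta)$ relative to $g_y$: indeed $\beta_i(u)=g_y(\beta_i^{\sharp y},u)$ and $g_y(y,u)/F(y)=g_y(y/F,u)$, and both $\beta_i^{\sharp y}$ and $y/F$ have $g_y$-length controlled by $\|\beta_i\|_F<\delta$ and $1$ respectively — note $g_y(\beta_i^{\sharp y},\beta_i^{\sharp y})=\|\beta_i\|_{F}^{2}\cdot$(bounded) more carefully, $g_y(\beta_i^\sharp,\beta_i^\sharp)=\beta_i(\beta_i^\sharp)\le F^*(\beta_i)^2$ where $F^*$ is the dual norm, and $F^*(\beta_i)$ is comparable to $\|\beta_i\|_F$. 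Hence by Cauchy--Schwarz in $g_y$, $|R_y(u,u)|\le C'(\delta)\,g_y(u,u)$ with $C'(\delta)\to 0$.

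Putting these together: choose $\delta$ small enough that $C'(\delta)<\tfrac14$ and $\rho\ge\tfrac12$ on the box; then $\bar g_y(u,u)\ge \tfrac12 g_y(u,u)-\tfrac14 g_y(u,u)=\tfrac14 g_y(u,u)>0$ for all $y\ne0$, $u\ne0$, which is M$_3$. The main obstacle I anticipate is making the perturbation estimate genuinely \emph{uniform} — uniform in the direction $y$ and, crucially, independent of which Minkowski norm $F$ we started with. The point is that everything is expressed intrinsically through $g_y$: the coefficients $\rho,\rho_0^{ij},\rho_1^i$ depend only on $s=(\beta_i/F)(y)$, which lies in the $\delta$-box regardless of $F$; and the vectors entering $R_y$ ($\beta_i^{\sharp y}$ and $y/F(y)$) have $g_y$-norms bounded purely in terms of $\delta$ (via the dual-norm bound $F^*(\beta_i)\le\|\beta_i\|_F$, which is norm-independent once phrased correctly) and the constant $1$. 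So the Cauchy--Schwarz bound $|R_y(u,u)|\le C'(\delta)g_y(u,u)$ has $C'(\delta)$ depending only on $\delta$ and $\sup$-bounds of $\phi$ and its first two derivatives on the box — not on $F$, $y$, or $n$. Once that uniformity is in hand, the choice of $\delta$ is a single threshold, exactly as the Proposition asserts.
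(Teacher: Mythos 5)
Your overall strategy coincides with the paper's: M$_1$ and M$_2$ are immediate, and M$_3$ is obtained by reading \eqref{E-c-value0} as the definite form $\rho\,g_y$ plus a perturbation whose coefficients $\rho_0^{ij},\rho_1^i$ vanish at $s=0$. The paper disposes of the perturbation in one line by fixing $F$, observing that $\bar g_y$ depends continuously on the $\beta_i$ uniformly over the compact indicatrix $F^{-1}(1)$ and is positive definite at $\beta_1=\cdots=\beta_p=0$. Your per-$F$ version of this (Cauchy--Schwarz against $g_y$, with compactness of $F^{-1}(1)$ supplying uniformity in $y$) is a correct quantitative rendering of the same argument.

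The gap sits exactly where you flagged the ``main obstacle'': uniformity in $F$. Your key estimate $g_y(\beta^{\sharp y},\beta^{\sharp y})=\beta(\beta^{\sharp y})\le F^*(\beta)^2$, and more generally the claim that $\|\beta\|_{g_y}$ is comparable to the $F$-dual norm with constants independent of $F$, is false: $g_y$ is not uniformly comparable to $F^2$ over the class of all Minkowski norms. If the indicatrix of $F$ is nearly flat at $y$ and $u$ is tangent to it there, then $g_y(y,u)=0$ and $g_y(u,u)=K_y(u,u)$ is close to $0$ while $F(u)$ is of order one; hence $\|\beta\|^2_{g_y}=g_y(\beta^{\sharp y},\beta^{\sharp y})$ can be made arbitrarily large while $F^*(\beta)$ stays below any fixed $\delta$. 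This is not a removable technicality: by \eqref{E-F001e} (equivalently \eqref{E-Yava2}) the sign of $\det\bar g_y$ is governed by $\phi-s\dot\phi+(b^2-s^2)\ddot\phi$ with $b^2=\|\beta\|^2_{g_y}$, so for any $\phi$ with $\ddot\phi<0$ near $0$ (e.g.\ $\phi(s)=1-s^2/2$) one can, for every $\delta>0$, pick a norm $F$ with a sufficiently flat indicatrix point and a $\beta$ of $F$-norm less than $\delta$ for which $\bar g_y$ fails to be positive definite. So a single $\delta$ valid for all $F$ cannot be extracted along your route --- nor does the paper's compactness argument produce one; it yields $\delta=\delta(F)$, which is how the proposition must be read. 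To repair your write-up, fix $F$, use compactness of $F^{-1}(1)$ to get $c_F|u|^2\le g_y(u,u)\le C_F|u|^2$ and hence $\|\beta\|_{g_y}\le C(F)\,F^*(\beta)$ uniformly in $y$, and let the resulting threshold depend on $F$.
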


\begin{proof}
The formula (\ref{E-c-value0}) shows that the inner products $\bar g_y$ for $y\in F^{-1}(1)$
depend uniformly on the 1-forms $\beta_i\ (i = 1,\ldots, p)$. For
$\beta_1 = \ldots = \beta _p = 0$, $\bar g_y = \rho\,g_y$ is positive definite. Compactness of the $F$-unit sphere implies the statement.
\end{proof}

\begin{remark}\label{R-2.1}\rm
One can get a more accurate than Proposition~\ref{P-03} sufficient conditions for positive definiteness of $\bar g_y$.
By \cite[Theorem~4.1]{js1}, $\bar g_y>0$ for $\bar F=\sqrt{L}=F\phi(\frac{\beta_1}F, \ldots,\frac{\beta_k}F)$, see \eqref{E-ab-def}, if
(for $L=F^2\phi^2$)

\smallskip
(a)	$L_{,F} > 0$, \quad (b) ${\rm Hess}(L) > 0$ (positive definite matrix).

\smallskip\noindent
For~$p=1$, we have $L_{,F} > 0$ and ${\rm Hess}(L)>0$ if and only if, see \cite[Eqn. (4.13)]{js1},
\[
 \phi - s\,\dot\phi>0,\quad
 \ddot\phi>0.
\]
For $p>1$, we find $L_{,F} = 2\,F\phi(\phi-\sum_{\,i} s_i\dot\phi_i)$, thus, the inequality $ L_{,F}>0$
is guaranteed by assumption \eqref{E-p-cond1}.
Calculation of ${\rm Hess}(L)$, see condition (b), is similar to the case of $p=1$ in the proof of \cite[Proposition~4.10]{js1}.
Put $L=F^2\psi(s_1,\ldots,s_p)$, where $s_i=b_i/F$, $\psi=\phi^2$ and $p>1$. Then $L_{,b_i} = F\,\dot\psi_{i}$ and
\begin{eqnarray*}
 L_{,FF} \eq 2\,\psi - 2\sum\nolimits_{\,i} s_i\,\dot\psi_i +\sum\nolimits_{\,i,j} s_i s_j\,\ddot\psi_{ij},\\
 L_{,F b_i} \eq  \dot\psi_i  - \sum\nolimits_{j} s_j\,\ddot\psi_{ij},\quad
 L_{,b_i b_j} = \ddot\psi_{ij}.
\end{eqnarray*}
Applying elementary transformations of matrices to the first row and first column of ${\rm Hess}(L)$, we find
\[
 {\rm Hess}(L) \sim \Psi =
 \left(
   \begin{array}{cccc}
     2\,\psi & \dot\psi_1 & \cdots & \dot\psi_p \\
     \dot\psi_1 & \ddot\psi_{11} & \cdots & \ddot\psi_{1p} \\
     \cdots & \cdots & \cdots & \cdots \\
     \dot\psi_p & \ddot\psi_{1p} & \cdots & \ddot\psi_{pp} \\
   \end{array}
 \right),\quad
 \det{\rm Hess}(L)=\det\Psi.
\]
In our case, ${\rm Hess}(L)>0$ is satisfied when $\det\Psi>0$ and $\psi$ is a convex function.
Note that conditions in Proposition~\ref{P-03} do not require $\psi=\phi^2$ to be convex.
\end{remark}

 We restrict ourselves to regular $T_{\vbeta,\phi}\, $-deformations alone, i.e., $\det\bar g_y\ne0\ (y\ne0)$.
By \eqref{E-c-value0},
\begin{equation}\label{E-c-value0-b}
 \bar g_y(u,v) = \rho\,g_y(u,v) +(\rho_{0}^{ij}+\eps^{-1}\rho_{1}^{i}\rho_{1}^{j})\beta_i(u)\beta_j(v)
 -\eps\,g_y(\tilde Y,u)\,g_y(\tilde Y,v),
\end{equation}
where
\begin{equation}\label{E-Y-eps-value}
 \tilde Y=\eps^{-1}\rho_1^i\beta_i^{\sharp,y} - y/F(y),\quad \eps=s_j\rho_1^j.
\end{equation}
The volume forms of metrics $\bar g_y$ and $g_y$ for $y\ne0$~are
\[
 \,{\rm d}\vol_{g_y}(e_1,\ldots,e_{n})=\sqrt{\det g_y(e_i,e_j)},\quad
 {\rm d}\vol_{\bar g_y}(e_1,\ldots,e_{n})=\sqrt{\det\bar g_y(e_i,e_j)},
\]
where $\{e_1,\ldots,e_{n}\}$ is a basis of $\RR^n$.
Then
 ${\rm d}\vol_{\bar g_y}=\sigma_{y}\,{\rm d}\vol_{g_y}$
for some function $\sigma_{y}>0$.
Define vectors $\tilde\beta_k=q_{k}^i\beta_i\ (k\le p)$, where
$q_k=(q_{k}^1,\ldots,q_{k}^p)\in\RR^p$ are unit eigenvectors with eigenvalues $\lambda^k$
of the
matrix $\{\rho_{0}^{ij}+\eps^{-1}\rho_{1}^{i}\rho_{1}^{j}\}$.
 Then \eqref{E-c-value0-b} takes the form, which can be used to find
 $\sigma_{y}$:
\begin{equation*}
 \bar g_y(u,v) = \rho\,g_y(u,v) + \sum\nolimits_{\,i}\lambda^i\,\tilde\beta_i(u)\,\tilde\beta_i(v)
 -\eps\,g_y(\tilde Y,u)\,g_y(\tilde Y,v) .
\end{equation*}

\begin{example}
\rm
For $p=2$, using
$\tilde b_{ij} =g_y(\tilde\beta_i,\tilde\beta_j)$,
$\tilde\beta_i=q_i^1\beta_1+q_i^2\beta_2$ and $\eps=\rho_1^1 s_1+\rho_1^2 s_2$,
we get
\begin{eqnarray*}
\nonumber
 \hskip-4mm \sigma_{y} \eq \rho^{n-1}\big( \rho^2
 +\rho(\lambda^1\tilde b_{11}+\lambda^2\tilde b_{22}) - \rho\,\eps g_y(\tilde Y,\tilde Y)
 +\lambda^1\lambda^2(\tilde b_{11} \tilde b_{22}-\tilde b_{12}^2) \\
\nonumber
 \minus \eps g_y(\tilde Y,\tilde Y)(\lambda^1\tilde b_{11}+\lambda^2 \tilde b_{22})
 +\lambda^1\eps g_y(\tilde\beta_1,\tilde Y)+\lambda^2\eps g_y(\tilde\beta_2,\tilde Y)
 +\lambda^1\lambda^2\,\eps/\rho\big[ \tilde b_{11} g_y(\tilde\beta_2,\tilde Y)^2 \\
 \plus \tilde b_{22} g_y(\tilde\beta_1,\tilde Y)^2
 +\tilde b_{12} g_y(\tilde Y,\tilde Y)^2 -\tilde b_{11}\tilde b_{22} g_y(\tilde Y,\tilde Y)
 -2 \tilde b_{12} g_y(\tilde\beta_1,\tilde Y)\, g_y(\tilde\beta_2,\tilde Y)\big] \big).
\end{eqnarray*}
A special case of $T_{\vbeta,\phi}\,$-deformation for $p=2$
is a shifted $T_{\phi,\beta_1}$-deformation $\bar F=F\phi(\beta_1/F)+\beta_2$.

(a)~For a shifted Kropina deformation $\bar F=F^2/{\beta_1}+\beta_2$, we get
\begin{eqnarray*}
 && \rho={(2+s_1)(1+s_1+s_1s_2)}/{s_1^2},\quad
 \rho_{1}^{1}=-{(4+3s_1+2s_1s_2)}/{s_1^3},\quad \rho_{1}^{2}={(2+s_1)}/{s_1},\\
 && \rho_{0}^{11}={(3+2s_1+2s_1s_2)}/{s_1^4},\qquad \rho_{0}^{12}=\rho_{0}^{21}=-1/{s_1^2},\qquad
  \rho_{0}^{22}=1.
\end{eqnarray*}

(b)~For a shifted slope deformation $\bar F=F^2/({F-\beta_1})+\beta_2$,
we~have
\begin{eqnarray*}
 && \rho=\frac{(1-2s_1)(1+s_2-s_1s_2)}{(1-s_1)^3},\quad
 \rho_{1}^{1}=\frac{1+2s_1(s_1s_2-s_2-2)}{(1-s_1)^4},\quad \rho_{1}^{2}=\frac{1-2s_1}{(1-s_1)^2},\\
 && \rho_{0}^{11}={(3-2s_1s_2+2s_2)}/{(1-s_1)^{4}},\quad \rho_{0}^{12}=\rho_{0}^{21}=1/{(1-s_1)^{2}},\quad
  \rho_{0}^{22}=1.
\end{eqnarray*}
Figure~\ref{F-3d-deform} shows the image of
$(y^1)^4+(y^2)^4+(y^3)^4=1$ in~$\RR^3$ for
a) quadratic deformation with $p=1$ and $\beta=0.3\,dy^3$,
b) shifted quadratic deforma\-tion with $p=2$ and $\beta_1=0.3\,dy^2,\ \beta_2=0.3\,dy^3$.
\begin{figure}[ht]
\begin{center}
\includegraphics[scale=.6,angle=0]{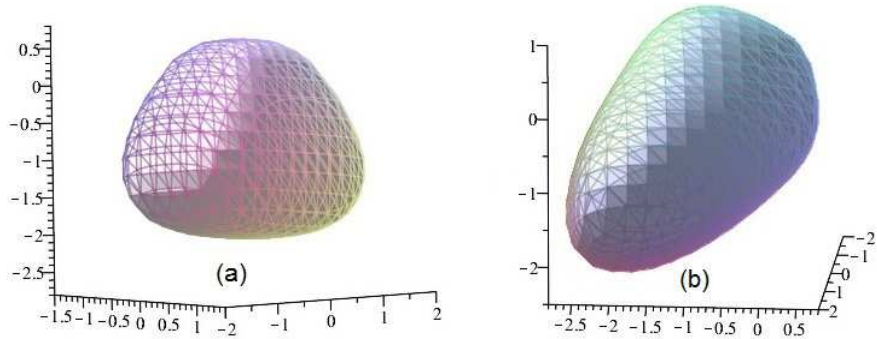}
\end{center}
\caption{Indicatrix of $m$-root norm in $\RR^3$ after shifted quadratic deformation.\label{F-3d-deform}}
\end{figure}
\end{example}

\begin{definition}
\rm
Let $G$ be a subgroup of $GL(n,\RR)$. Then a Minkowski norm $F$ on $\RR^{n}$ is called $G$-\textit{invariant} if the following
holds for some affine coordinates $(y^1,\ldots, y^{n})$ of~$\RR^n$:
\begin{equation*}
  F(y^1,\ldots, y^{n}) = F(f(y^1,\ldots, y^{n})),\quad y\in \RR^n,\ f\in G.
\end{equation*}
\end{definition}

\begin{proposition}[see \cite{r-fin3}]\label{P-symmetry}
A Minkowski norm $F$ on $\RR^{n}$ is $G$-invariant, where
\[
 G=\Big\{\Big(\,\begin{matrix}
     C & {0} \\ {0} & \id_p \\
   \end{matrix}\,\Big),\ \
   C\in O(n-p,\RR)\,\Big\},
\]
if and only if there exist linear independent 1-forms $\beta_i\ (1\le i\le p)$, for which $F$ is $(\alpha,\vbeta)$-norm.
\end{proposition}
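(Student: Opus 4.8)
The plan is to prove both implications by passing to affine coordinates adapted to the $G$-invariant splitting $\RR^n=V\oplus W$ with $V\cong\RR^{\,n-p}$ (the first $n-p$ coordinates) and $W\cong\RR^p$ (the last $p$ coordinates); we may assume $p<n$, the case $p=n$ (where $G$ is trivial) being immediate.

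\emph{Sufficiency.} Suppose $F=T_{\vbeta,\phi}(\alpha)=\alpha\,\phi(\beta_1/\alpha,\ldots,\beta_p/\alpha)$. Since $\beta_1,\ldots,\beta_p$ are linearly independent, their $\alpha$-duals $\beta_1^\sharp,\ldots,\beta_p^\sharp$ span a $p$-dimensional subspace $U$; choose an $\alpha$-orthonormal basis $e_1,\ldots,e_n$ of $\RR^n$ with $U=\mathrm{span}\{e_{n-p+1},\ldots,e_n\}$ and let $(y^1,\ldots,y^n)$ be the associated coordinates. Then $\alpha(y)=\big(\sum_i (y^i)^2\big)^{1/2}$ and each $\beta_i$ is a linear combination of $dy^{n-p+1},\ldots,dy^n$ alone. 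For $f\in G$, that is $f=\mathrm{diag}(C,\mathrm{id}_p)$ with $C\in O(n-p)$, the matrix $f$ is $\alpha$-orthogonal, so $\alpha\circ f=\alpha$, and $f$ fixes the last $p$ coordinates, so $\beta_i\circ f=\beta_i$; hence $F\circ f=(\alpha\circ f)\,\phi(\beta_1\circ f/\alpha\circ f,\ldots)=F$, i.e.\ $F$ is $G$-invariant.

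\emph{Necessity.} Conversely, suppose $F$ is $G$-invariant in affine coordinates $(y^1,\ldots,y^n)$; write $y=(v,w)$ with $v\in V$, $w\in W$, and $r=|v|=\big(\sum_{i=1}^{n-p}(y^i)^2\big)^{1/2}$. Since $O(n-p)$ acts transitively on every sphere of $V$, the point $(v,w)$ lies in the $G$-orbit of $(r\,e_1,w)$, so $G$-invariance gives $F(v,w)=H(r,w)$ where $H(r,w):=F(r\,e_1+w)$ is smooth on $\RR^{\,p+1}\setminus\{0\}$ (a linear map composed with $F$), positively $1$-homogeneous, and even in $r$ (as $\pm r\,e_1$ lie in a common $G$-orbit); hence $H(r,w)=\widetilde H(r^2,w)$ for a smooth $\widetilde H$. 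Now put $\alpha(y)=\big(\sum_{i=1}^n (y^i)^2\big)^{1/2}=\sqrt{r^2+|w|^2}$, $\beta_i=dy^{n-p+i}$ (so $\beta_i(y)=w_i$ and the $\beta_i$ are linearly independent), $s_i=\beta_i(y)/\alpha(y)$, and $\phi(s):=\widetilde H(1-|s|^2,s)$. As $s$ ranges over the closed unit ball of $\RR^p$ and $\phi(s)=H(\sqrt{1-|s|^2},s)=F(\sqrt{1-|s|^2}\,e_1+s)>0$, the function $\phi$ is smooth and positive on a neighbourhood of $\{|s|\le 1\}$, in particular on some box $\prod_i[-\delta_i,\delta_i]$ as in Definition~\ref{D-01}. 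Finally, using $1-|s|^2=r^2/(r^2+|w|^2)$ and the $1$-homogeneity of $H$,
\[
 \alpha(y)\,\phi(s)=\sqrt{r^2+|w|^2}\;H\Big(\tfrac{r}{\sqrt{r^2+|w|^2}},\tfrac{w}{\sqrt{r^2+|w|^2}}\Big)=H(r,w)=F(y),
\]
so $F=T_{\vbeta,\phi}(\alpha)$, i.e.\ $F$ is an $(\alpha,\vbeta)$-norm.

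\emph{Expected main obstacle.} The delicate point is the regularity of $\phi$: a priori $F$ is only known to depend on the non-smooth quantity $r=|v|$, and $\phi$ is assembled using $\sqrt{1-|s|^2}$, which is non-smooth exactly where $|s|=1$, i.e.\ at the values of $s$ realized when $r=0$. Both difficulties disappear once one notices that $O(n-p)$-invariance forces $F$ to be even in the radial $V$-variable, so that $H$, and with it $\phi$, factors smoothly through $r\mapsto r^2$ (trivially so when $n-p=1$, where the condition is just evenness in $y^1$). The remaining work — verifying that $(\alpha,\vbeta,\phi)$ meets every requirement of Definition~\ref{D-01}, in particular $F(\beta_i)<\delta_i$ and linear independence of the $\beta_i$ — is routine.
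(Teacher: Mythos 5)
The paper itself offers no proof of Proposition~\ref{P-symmetry} (it is quoted from \cite{r-fin3}), so your argument cannot be compared line by line with the source; judged on its own, it is a correct and essentially complete proof, and it isolates the right technical point. The sufficiency direction is routine. For necessity, the substance is exactly what you flag at the end: transitivity of $O(n-p)$ on the spheres of $V$ reduces $F$ to a function $H(r,w)$ of $r=|v|$ and $w$, and the passage from $H$ to a function $\phi$ that is \emph{smooth up to and across} $|s|=1$ (the directions $y\in W$, where $\sqrt{1-|s|^2}$ alone would be non-smooth) rests on $H$ being even in $r$, hence factoring smoothly through $r\mapsto r^2$ by Whitney's theorem on even functions; the evenness in turn needs an element of $O(n-p)$ sending $e_1$ to $-e_1$, which exists for every $n-p\ge1$, so the argument is uniform in $p$. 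Two cosmetic points are worth fixing. First, Definition~\ref{D-01} literally requires $\phi$ to be smooth and positive on a box $\prod_i[-\delta_i,\delta_i]$ with $\delta_i>\alpha(\beta_i)=1$; for $p\ge2$ a neighbourhood of the Euclidean unit ball of $\RR^p$ need not contain such a box, so you should add a sentence extending $\phi$ smoothly and positively from that neighbourhood to the box (any extension will do, since the values of $s$ actually realized satisfy $|s|\le1$, so $F$ is unchanged). Second, in the degenerate case $p=n$ the claim that every norm is an $(\alpha,\vbeta)$-norm is true but deserves the one-line argument ($s$ then determines $y/\alpha(y)$, so take $\phi=F$ near the unit sphere, extended). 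Neither point is a gap in substance.
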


Using Proposition~\ref{P-symmetry}, we can prove the following.

\begin{corollary}
Let $F$ and $\bar F=T_{\vbeta,\phi}(F)$ be Minkowski norms in $\RR^n$, where
$\vbeta=(\beta_1,\ldots,\beta_{p})$ and $\phi$ is a function of $p<n$ variables.
Suppose that $F$ is an $(\alpha,\beta)$-norm
with 1-form $\beta=\beta_{p+1}$
transverse to ${\rm span}\{\beta_{1},\ldots,\beta_{p}\}$.
Then $\bar F=T_{\vbeta,\psi}(\alpha)$,
where $\vbeta=(\beta_1,\ldots,\beta_{p+1})$, $\psi$ is a function of $p+1$ variables and $\alpha$ is a Euclidean~norm.
\end{corollary}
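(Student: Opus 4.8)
The plan is to unwind both hypotheses into the group-invariance language of Proposition~\ref{P-symmetry} and then recompose them. Since $F$ is an $(\alpha,\beta)$-norm with $\beta=\beta_{p+1}$, by Proposition~\ref{P-symmetry} (applied with $p=1$ and the single form $\beta_{p+1}$) there is a Euclidean norm $\alpha$ and a smooth positive function $\phi_1$ of one variable such that $F=T_{\beta_{p+1},\phi_1}(\alpha)=\alpha\,\phi_1(\beta_{p+1}/\alpha)$. Then $\bar F=T_{\vbeta,\phi}(F)=T_{\vbeta,\phi}\!\left(T_{\beta_{p+1},\phi_1}(\alpha)\right)$, so the task reduces to showing that this composition of a $T_{\vbeta,\phi}$-deformation (with $\vbeta=(\beta_1,\dots,\beta_p)$) after a $T_{\beta_{p+1},\phi_1}$-deformation can be written as a single $T_{(\beta_1,\dots,\beta_{p+1}),\psi}$-deformation of $\alpha$ for an appropriate $\psi$ of $p+1$ variables. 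Note that Proposition~\ref{prop:compos} only handles compositions with the \emph{same} $\vbeta$, so here we genuinely need the mixed case.

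**First I would** write out the composition explicitly. Set $\alpha=F_0$ and $t=\beta_{p+1}/\alpha$, so $F=\alpha\,\phi_1(t)$. For the outer deformation, the arguments are $s_i=\beta_i/F=\beta_i/(\alpha\,\phi_1(t))$ for $1\le i\le p$. Hence
\[
 \bar F = F\cdot\phi(s_1,\dots,s_p)
 = \alpha\,\phi_1(t)\,\phi\!\left(\frac{\beta_1}{\alpha\,\phi_1(t)},\dots,\frac{\beta_p}{\alpha\,\phi_1(t)}\right).
\]
Now introduce the natural $(p+1)$ variables for a deformation of $\alpha$, namely $u_i=\beta_i/\alpha$ for $1\le i\le p$ and $u_{p+1}=\beta_{p+1}/\alpha=t$. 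Then $s_i=u_i/\phi_1(u_{p+1})$, and the displayed expression becomes $\bar F=\alpha\,\psi(u_1,\dots,u_{p+1})$ with
\[
 \psi(u_1,\dots,u_{p+1})
 = \phi_1(u_{p+1})\,\phi\!\left(\frac{u_1}{\phi_1(u_{p+1})},\dots,\frac{u_p}{\phi_1(u_{p+1})}\right).
\]
This is precisely the shape $T_{\vbeta,\psi}(\alpha)$ with $\vbeta=(\beta_1,\dots,\beta_{p+1})$, and for $p=0$ it degenerates to the formula in Proposition~\ref{prop:compos}.

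**Then I would** check the remaining conditions in Definition~\ref{D-01} and verify that $\bar F$ is a genuine Minkowski norm. The forms $\beta_1,\dots,\beta_{p+1}$ are linearly independent by the transversality hypothesis ($\beta_{p+1}\notin\operatorname{span}\{\beta_1,\dots,\beta_p\}$). Smoothness and positivity of $\psi$ on a suitable product of intervals follow from those of $\phi_1$ and $\phi$ together with the constraint $\phi_1(u_{p+1})>0$ (which holds on the domain of $\phi_1$), after shrinking the $\delta_i$ so that the rescaled arguments $u_i/\phi_1(u_{p+1})$ stay in the domain of $\phi$; one also uses $\alpha(\beta_i)<\delta_i$ for the new norm $\alpha$ in place of $F(\beta_i)$, which can be arranged since $\alpha$ and $F$ are equivalent norms on $\RR^n$. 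That $\bar F$ is Minkowski is already given (it is a hypothesis), but if one wants it intrinsically one can invoke Proposition~\ref{P-03} for the resulting $\psi$. The only mild subtlety — and **the main point to be careful about** — is the bookkeeping of domains: the outer $\phi$ is evaluated at $s_i=\beta_i/F$, and rewriting $F$ in terms of $\alpha$ divides these by $\phi_1$, so one must ensure the composite $\psi$ is well-defined and positive on a neighborhood of the origin in $\RR^{p+1}$; this is where a short estimate (or an application of Proposition~\ref{P-03}) closes the argument. Everything else is a direct substitution, exactly as the paper's remark preceding Proposition~\ref{prop:compos} ("By direct calculation we get") suggests.
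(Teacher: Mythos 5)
Your proof is correct, but it takes a different route from the one the paper indicates. The paper gives no written proof of this corollary; it only says it follows ``using Proposition~\ref{P-symmetry}'', i.e.\ the intended argument is via the $G$-invariance characterization of $(\alpha,\vbeta)$-norms: $F=\alpha\,\phi_1(\beta_{p+1}/\alpha)$ and the forms $\beta_1,\dots,\beta_p$ are all preserved by the rotations of $(\RR^n,\alpha)$ that fix ${\rm span}\{\beta_1^\sharp,\dots,\beta_{p+1}^\sharp\}$ pointwise, so $\bar F=F\,\phi(\beta_1/F,\dots,\beta_p/F)$ inherits this invariance and Proposition~\ref{P-symmetry} identifies it as an $(\alpha,\vbeta)$-norm with the enlarged $\vbeta$. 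You instead substitute $F=\alpha\,\phi_1(\beta_{p+1}/\alpha)$ --- which is simply the definition of an $(\alpha,\beta)$-norm, so your invocation of Proposition~\ref{P-symmetry} at that step is unnecessary --- into the outer deformation and read off the explicit
$\psi(u_1,\dots,u_{p+1})=\phi_1(u_{p+1})\,\phi\bigl(u_1/\phi_1(u_{p+1}),\dots,u_p/\phi_1(u_{p+1})\bigr)$ with $u_i=\beta_i/\alpha$. This computational route is more elementary and constructive: it produces $\psi$ explicitly and exhibits the corollary as the mixed-$\vbeta$ analogue of Proposition~\ref{prop:compos}, at the cost of the domain bookkeeping, which you handle adequately; the invariance route is shorter but does not exhibit $\psi$. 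Your remarks that linear independence of $(\beta_1,\dots,\beta_{p+1})$ is exactly the transversality hypothesis, and that Proposition~\ref{prop:compos} does not apply directly because the two deformations use different $\vbeta$'s, are both correct and worth keeping.
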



\subsection{Case of $p=1$}
\label{sec:p-1a}

Here, we illustrate and clarify some results about our transformations of Minkowski norms for the case $p=1$.
By Proposition~\ref{P-symmetry} for $p=1$ and $n=2$, any axisymmetric convex body $B$ in $\RR^2$ (for example, indicatrix of an $m$-root norm)
can be moved to a unit disc by $T_{\beta,\phi}$-deformation.

 The next lemma is used to compute the volume forms,
it extends the Silvester's determinant identity
 $\det(\id_n+C_1 P_1^t) = 1+C_1^t P_1$,
where $C_1$ and $P_1$ are $n$-vectors (columns).

\begin{lem}\label{L-det-F}
Given real $c_1,c_2$, vectors $b^1,b^2$ in $\RR^n$, and reversible symmetric $n\times n$ matrix $a_{ij}$, define
matrices $A_{ij}=a_{ij}+c_1b_i^1b_j^1$ and $g_{ij}=a_{ij}+c_1b_i^1b_j^1+c_2b_i^2b_j^2$. Then
\begin{eqnarray*}
 \det[A_{ij}] \eq \det[a_{ij}](1+c_1|b^1|^2_a),\\
 \det[g_{ij}] \eq \det[a_{ij}]\cdot[(1+c_1|b^1|^2_a)(1+c_2|b^2|^2_a) -c_1c_2\<b^1,b^2\>_a^2].
\end{eqnarray*}
\end{lem}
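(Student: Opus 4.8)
The plan is to prove both determinant identities by a rank-one update argument, reducing everything to the classical matrix determinant lemma $\det(I_n + uv^t)=1+v^tu$, equivalently $\det(M + uv^t) = \det(M)(1 + v^t M^{-1} u)$ for invertible $M$. First I would handle the single rank-one perturbation: writing $[A_{ij}] = [a_{ij}] + c_1\, b^1 (b^1)^t$ and factoring out $[a_{ij}]$ (which is legitimate since $a_{ij}$ is reversible, i.e.\ invertible), we get
\[
 \det[A_{ij}] = \det[a_{ij}]\cdot\det\big(\id_n + c_1\, [a_{ij}]^{-1} b^1 (b^1)^t\big)
 = \det[a_{ij}]\,\big(1 + c_1\, (b^1)^t [a_{ij}]^{-1} b^1\big).
\]
Recognizing $(b^1)^t [a_{ij}]^{-1} b^1 = |b^1|^2_a$ in the notation of the lemma (the squared norm with respect to the inner product whose Gram matrix is $a_{ij}$, or rather its dual), this is exactly the first claimed formula.

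For the second identity I would iterate. Write $g_{ij} = A_{ij} + c_2\, b^2 (b^2)^t$, and apply the matrix determinant lemma again, now with base matrix $[A_{ij}]$ in place of $[a_{ij}]$:
\[
 \det[g_{ij}] = \det[A_{ij}]\,\big(1 + c_2\, (b^2)^t [A_{ij}]^{-1} b^2\big).
\]
The main work is then to compute $(b^2)^t [A_{ij}]^{-1} b^2$, for which I would invoke the Sherman--Morrison formula
\[
 [A_{ij}]^{-1} = [a_{ij}]^{-1} - \frac{c_1\, [a_{ij}]^{-1} b^1 (b^1)^t [a_{ij}]^{-1}}{1 + c_1\, |b^1|^2_a}.
\]
Pairing this against $b^2$ on both sides yields
\[
 (b^2)^t [A_{ij}]^{-1} b^2 = |b^2|^2_a - \frac{c_1\,\<b^1,b^2\>_a^2}{1 + c_1\, |b^1|^2_a}.
\]
Substituting back, multiplying through, and using the first identity $\det[A_{ij}] = \det[a_{ij}](1+c_1|b^1|^2_a)$ to clear the denominator, the factor $1+c_1|b^1|^2_a$ cancels and one is left precisely with
\[
 \det[g_{ij}] = \det[a_{ij}]\big[(1+c_1|b^1|^2_a)(1+c_2|b^2|^2_a) - c_1 c_2\,\<b^1,b^2\>_a^2\big].
\]

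I do not expect any genuine obstacle here; the proof is essentially bookkeeping. The one point deserving a sentence of care is the invertibility hypothesis: the matrix determinant lemma and Sherman--Morrison both require $[a_{ij}]$ invertible (granted by "reversible" in the statement) and, for the second step, $1 + c_1 |b^1|^2_a \ne 0$; but since the final formula is polynomial in $c_1, c_2$ and the entries, the identity on the open dense set where these denominators are nonzero extends by continuity (or by a formal Schur-complement computation on the bordered $(n+2)\times(n+2)$ matrix) to all values. An alternative, perhaps cleaner, route that sidesteps the denominator entirely is to compute $\det[g_{ij}]$ directly as a Schur complement of the block matrix
\[
 \begin{pmatrix} [a_{ij}] & b^1 & b^2 \\ -(b^1)^t & c_1^{-1} & 0 \\ -(b^2)^t & 0 & c_2^{-1} \end{pmatrix},
\]
expanding its determinant two ways; this makes the symmetry between the two rank-one terms manifest and reproduces the $2\times 2$ pattern $(1+c_i|b^i|^2)$ on the diagonal and $-\<b^1,b^2\>^2$ off-diagonal that visibly generalizes to the $p$ rank-one terms needed later for the volume form computations.
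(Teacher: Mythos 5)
Your argument is correct and is essentially the paper's own proof: the paper likewise establishes the rank-one case first (citing the matrix determinant lemma), then writes $[g_{ij}]=[A_{ij}]+c_2 b^2(b^2)^t$, uses the Sherman--Morrison inverse $A^{kl}=a^{kl}-\frac{c_1}{1+c_1|b^1|^2_a}b^{1k}b^{1l}$ to compute $|b^2|^2_A=|b^2|^2_a-\frac{c_1}{1+c_1|b^1|^2_a}\<b^1,b^2\>_a^2$, and substitutes back. Your added remarks on the degenerate case $1+c_1|b^1|^2_a=0$ and the bordered-matrix alternative go slightly beyond what the paper records, but the core route is the same.
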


\proof The first claim is straightforward, \cite[Lemma~4.1]{shsh2016}.
If $1+c_1|b^1|^2_a\ne0$ then the inverse matrix
$A^{kl}=a^{kl}-\frac{c_1}{1+c_1|b^1|^2_a}b_k^1b_l^1$ exists.
For any vectors $u,v$ we get
\[
 \<u,v\>_A = A^{kl}u_k v_l = \big(a^{kl} -\frac{c_1b^{1k}b^{1l}}{1+c_1|b^1|^2_a}\big) u_k v_l
 = \<u,v\>_a - \frac{c_1}{1+c_1|b^1|^2_a}\<b^1,u\>_a\<b^1,v\>_a.
\]
Hence,
\[
 |b^2|^2_A = A^{kl}b_k^2b_l^2 = |b^2|^2_a - \frac{c_1}{1+c_1|b^1|^2_a}\<b^1,b^2\>^2_a.
\]
Using the first claim, we get $\det[g_{ij}]{=}\det[A_{ij}](1{+}c_2|b^2|^2_A)$.
The above yields the second claim.\hfill$\square$

\smallskip

We will specify Proposition~\ref{P-03} for $p=1$ (and generalize \cite[Lemma~2]{shsh2016}, that is for $F=\alpha$).

\begin{proposition}[see Corollary~4.25 in\cite{js1}]\label{P-2.5}
Let $F = F_0/(\phi/F_0)$, choose $y_0\in A\setminus 0$ (see Remark~\ref{R-1.1new} and \cite[Proposition 4.10]{js1}),
and put $s_0 = \beta(y_0)/F_0(y_0)$. In the case of dimension $n > 2$, the fundamental
tensor $g_{y_0}$ is positive definite if and only if
\begin{subequations}
\begin{eqnarray}\label{E-Yava1}
 && \phi(s_0)-s_0\,\dot\phi(s_0) > 0, \\
\label{E-Yava2}
 && \phi(s_0)-s_0\,\dot\phi(s_0) +(\|\beta\|^2_{g^0_{y_0}}-s_0^2)\,\ddot\phi(s_0) > 0,
\end{eqnarray}
\end{subequations}
and, in the case of $n = 2$, $g_{y_0}$ is positive definite if and only if \eqref{E-Yava2} holds.
\end{proposition}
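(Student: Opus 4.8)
The plan is to substitute the case $p=1$ of formula \eqref{E-c-value0} into a block decomposition adapted to $g^0:=g^0_{y_0}$, the fundamental tensor of $F_0$ at $y_0$. Writing $F=T_{\beta,\phi}(F_0)$ (so that, in the notation of the Theorem, $F$ plays the role of $\bar F$ and $F_0$ that of $F$, while $g_{y_0}$ is the $T_{\beta,\phi}$-image of $g^0$) and freezing $s=s_0$, the identity \eqref{E-c-value0} becomes
\begin{equation*}
 g_{y_0}(u,v)=\rho\,g^0(u,v)+\rho_0\,\beta(u)\beta(v)
 +\frac{\rho_1}{F_0}\big(\beta(u)\,g^0(y_0,v)+\beta(v)\,g^0(y_0,u)\big)-\frac{\rho_1\,\beta(y_0)}{F_0^3}\,g^0(y_0,u)\,g^0(y_0,v),
\end{equation*}
where $F_0:=F_0(y_0)$, $\rho=\phi(\phi-s_0\dot\phi)$, $\rho_0=\phi\ddot\phi+\dot\phi^2$, $\rho_1=\phi\dot\phi-s_0(\phi\ddot\phi+\dot\phi^2)$, all evaluated at $s_0$. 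First I would pick a $g^0$-orthonormal basis $e_1,\ldots,e_n$ of $\RR^n$ with $e_1=y_0/F_0$ and, whenever $\beta^\sharp$ (the $g^0$-dual of $\beta$) is not proportional to $y_0$, with $e_2$ a unit vector in the $g^0$-orthogonal complement of $e_1$ inside $\mathrm{span}\{y_0,\beta^\sharp\}$; then $\beta(e_1)=s_0$, $\beta(e_2)=\pm t$ with $t^2=\|\beta\|^2_{g^0_{y_0}}-s_0^2\ge0$ (Cauchy--Schwarz, with equality iff $\beta^\sharp\parallel y_0$), and $\beta(e_j)=g^0(y_0,e_j)=0$ for $j\ge3$.

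The key point is that in this basis $g_{y_0}$ has block form $\mathrm{diag}\big(M,\ \rho\,I_{n-2}\big)$: for $j\ge3$ every correction term in the displayed identity kills $e_j$, so $g_{y_0}(e_j,e_k)=\rho\,\delta_{jk}$ and $g_{y_0}(e_i,e_j)=0$ for $i\le2<j$. Hence $g_{y_0}$ is positive definite if and only if the $2\times2$ Gram matrix $M=\big(g_{y_0}(e_i,e_j)\big)_{i,j\le2}$ is positive definite, and in addition $\rho>0$ whenever $n>2$ (i.e. whenever the second block is present). Next I would evaluate the displayed formula on $e_1,e_2$ and simplify, obtaining $M_{11}=\phi^2$, $M_{12}=t\,\phi\dot\phi$ and $M_{22}=\phi(\phi-s_0\dot\phi)+t^2(\phi\ddot\phi+\dot\phi^2)$, whence (by direct expansion, or by the $2\times2$ instance of Lemma~\ref{L-det-F}) $\det M=\phi^3\big(\phi-s_0\dot\phi+t^2\ddot\phi\big)$.

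Since $\phi>0$ throughout, one reads off $\rho>0\iff\phi(s_0)-s_0\dot\phi(s_0)>0$, which is \eqref{E-Yava1}; and $M>0\iff M_{11}>0$ (automatic, as $M_{11}=\phi^2$) and $\det M>0$, i.e. $\iff\phi(s_0)-s_0\dot\phi(s_0)+(\|\beta\|^2_{g^0_{y_0}}-s_0^2)\,\ddot\phi(s_0)>0$, which is \eqref{E-Yava2}. For $n>2$ the second block is nonempty, so positive definiteness of $g_{y_0}$ is equivalent to \eqref{E-Yava1} and \eqref{E-Yava2} together; for $n=2$ the matrix $M$ is the whole Gram matrix, and since $M_{11}=\phi^2>0$ is automatic, only \eqref{E-Yava2} survives --- this is precisely why \eqref{E-Yava1} drops out in dimension two. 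I would close by checking that the excluded sub-case $\beta^\sharp\parallel y_0$ ($t=0$) is consistent: there \eqref{E-Yava1} and \eqref{E-Yava2} coincide and $M$ collapses to $\mathrm{diag}(\phi^2,\rho)$; and, as a sanity check, that taking $F_0=\alpha$ recovers the classical Chern--Shen condition for $(\alpha,\beta)$-norms, generalizing \cite[Lemma~2]{shsh2016}. The main obstacle is purely computational --- carrying the $F_0$-versus-$g^0$ normalizations through the evaluation of $M$ without slips --- and the conceptual point to get right is that the block decomposition is essential, since Lemma~\ref{L-det-F} alone controls only $\det g_{y_0}$, not the signs of all leading minors.
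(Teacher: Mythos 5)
Your argument is correct, and the computations check out: with $e_1=y_0/F_0(y_0)$ one indeed gets $M_{11}=\rho+\rho_0s_0^2+\rho_1 s_0=\phi^2$, $M_{12}=t(\rho_0 s_0+\rho_1)=t\phi\dot\phi$, $M_{22}=\rho+\rho_0t^2$, hence $\det M=\phi^3(\phi-s_0\dot\phi+t^2\ddot\phi)$, and the block structure $g_{y_0}\cong M\oplus\rho\,I_{n-2}$ makes the dichotomy between $n=2$ and $n>2$ transparent. Be aware, however, that the paper offers no proof of this proposition at all: it is imported verbatim as Corollary~4.25 of \cite{js1}, so there is no ``paper's proof'' to match. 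Your route is genuinely different from the one the paper gestures at. The \cite{js1} approach (sketched in Remark~\ref{R-2.1}) runs through the sufficient conditions $L_{,F}>0$ and ${\rm Hess}(L)>0$ for $L=F_0^2\phi^2$, i.e.\ it diagonalizes in the auxiliary variables $(F_0,\beta)$ rather than in $\RR^n$; the paper's own use of the statement (the corollary that follows it) instead combines the determinant formula of Lemma~\ref{L-det-F} with a homotopy $\phi_t=1-t+t\phi$ to upgrade $\det\bar g^{\,t}_y>0$ to positive definiteness. Your direct orthonormal-frame computation from \eqref{E-c-value0-1} buys two things those routes do not give for free: it yields the \emph{necessity} of the conditions (not just sufficiency, which is all the Hessian criterion provides), and it correctly isolates why \eqref{E-Yava1} disappears when $n=2$ --- as you note, Lemma~\ref{L-det-F} alone controls only the determinant, not all leading minors. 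Two small points worth making explicit: the construction of the $g^0$-orthonormal frame presupposes that $g^0_{y_0}$ is positive definite (which is implicit in the statement, and is needed even to ensure $\|\beta\|^2_{g^0_{y_0}}-s_0^2\ge0$); and the hypothesis ``$F=F_0/(\phi/F_0)$'' in the statement is a typo for $F=F_0\,\phi(\beta/F_0)$, which you have interpreted correctly.
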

The following result is a consequence of Proposition~\ref{P-2.5}.
We prove it here just for the convenience of a reader.

\begin{corollary}
The function $\bar F=F\phi(s)$, where $s=\beta/F$, is a Minkowski norm
for any Minkowski norm $F$
and a 1-form $\beta$ on $\RR^n$ with $F(\beta)< b_0$,
if and only if the function $\phi:(-b_0, b_0)\to\RR$
satisfies
\begin{equation}\label{E-phi-cond}
 \phi(s)>0,\quad
 \phi(s)-s\,\dot\phi(s) +(b^2-s^2)\,\ddot\phi(s) > 0 ,
\end{equation}
where $s$ and $b$ are arbitrary real numbers with $|s| \le b < b_0$.
\end{corollary}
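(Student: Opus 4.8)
The plan is to obtain the corollary directly from Proposition~\ref{P-2.5} by freeing the norm $F$, the $1$-form $\beta$ and the point $y_0$ in that proposition and matching the two scalars $s_0=\beta(y_0)/F(y_0)$ and $b=\|\beta\|_{g_{y_0}}$ to the free variables $s,b$ of \eqref{E-phi-cond}. First I would isolate two elementary facts, valid for every Minkowski norm $F$, every $1$-form $\beta$ and every $y_0\neq0$: by Cauchy--Schwarz for the inner product $g_{y_0}$,
\[
 |s_0|=\frac{|g_{y_0}(\beta^{\sharp y_0},y_0)|}{F(y_0)}\le\frac{\|\beta\|_{g_{y_0}}\,\|y_0\|_{g_{y_0}}}{F(y_0)}=\|\beta\|_{g_{y_0}},
\]
since $\|y_0\|_{g_{y_0}}^2=g_{y_0}(y_0,y_0)=F^2(y_0)$, and $\|\beta\|_{g_{y_0}}\le F(\beta)$ (reading $F(\beta)$ as $\sup_{y\neq0}\|\beta\|_{g_y}$). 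Consequently, if $F(\beta)<b_0$ then $(s_0,\|\beta\|_{g_{y_0}})$ is an \emph{admissible pair} in the sense of \eqref{E-phi-cond}, i.e. $|s_0|\le\|\beta\|_{g_{y_0}}<b_0$; in particular $s=\beta/F$ takes values in $(-b_0,b_0)$, the domain of $\phi$. Because $F$ and $\beta$ are $1$-homogeneous and linear, and $\phi>0$ on $(-b_0,b_0)$ (which is part of \eqref{E-phi-cond}), the map $\bar F=F\,\phi(\beta/F)$ automatically satisfies M$_1$ and M$_2$ and is positive off the origin, so ``$\bar F$ is a Minkowski norm'' reduces to ``$\bar g_y$ is positive definite for every $y\neq0$''.

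For the implication $\Leftarrow$, assume \eqref{E-phi-cond}, fix $F,\beta$ with $F(\beta)<b_0$ and fix $y_0\neq0$; set $s_0=\beta(y_0)/F(y_0)$ and $b=\|\beta\|_{g_{y_0}}$. Feeding the admissible pair $(s_0,b)$ into \eqref{E-phi-cond} gives $\phi(s_0)>0$ together with \eqref{E-Yava2}; feeding instead the admissible pair $(s_0,|s_0|)$ (legitimate since $|s_0|\le|s_0|<b_0$) makes the last term vanish and yields \eqref{E-Yava1}. Thus both hypotheses of Proposition~\ref{P-2.5} hold when $n>2$, and \eqref{E-Yava2} alone holds when $n=2$; either way $\bar g_{y_0}$ is positive definite. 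As $y_0$ was arbitrary, $\bar F$ is a Minkowski norm.

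For the implication $\Rightarrow$ I would argue by contraposition, using that the Euclidean norm already realises every admissible pair. Indeed, for $F=\alpha$ one has $g_y\equiv\<\cdot,\cdot\>$ for all $y$, so given $b<b_0$ and $s$ with $|s|\le b$ one chooses a covector $\beta$ with $|\beta|=b$ and an $\alpha$-unit vector $y_0$ with $\<\beta^{\sharp},y_0\>=s$ (possible because $n\ge2$, so every value in $[-b,b]$ is attained by $\<\beta^\sharp,\cdot\>$ on the unit sphere); then $s_0=s$, $\|\beta\|_{g_{y_0}}=b$ and $\alpha(\beta)=b<b_0$. Now suppose \eqref{E-phi-cond} fails at some admissible $(s_\ast,b_\ast)$. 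If $\phi(s_\ast)\le0$, the corresponding $\bar F=\alpha\,\phi(\beta/\alpha)$ has $\bar F(y_0)=\alpha(y_0)\,\phi(s_\ast)\le0$, so $\bar F$ is not a Minkowski norm. Otherwise \eqref{E-Yava2} is violated at $(s_\ast,b_\ast)$, so by Proposition~\ref{P-2.5} the fundamental tensor of $\bar F$ at $y_0$ is not positive definite; again $\bar F$ is not a Minkowski norm. In either case the property displayed in the corollary fails, which is the desired contrapositive.

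The one step I expect to require care is the passage from the hypothesis ``$F(\beta)<b_0$'' to the bound ``$\|\beta\|_{g_y}<b_0$ for all $y$'' used above — equivalently, pinning down the meaning of the norm $F(\beta)$ of a $1$-form so that the range of $s=\beta/F$ is exactly $(-b_0,b_0)$ and the scalar $b$ of Proposition~\ref{P-2.5} never leaves $[\,0,b_0)$. With the reading $F(\beta):=\sup_{y\neq0}\|\beta\|_{g_y}$ (precisely the quantity entering Proposition~\ref{P-2.5}) this is immediate and the argument above is complete; everything else is routine bookkeeping with Proposition~\ref{P-2.5} and Cauchy--Schwarz.
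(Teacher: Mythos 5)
Your proof is correct, but it follows a genuinely different route from the one written in the paper. You derive the corollary directly from the pointwise criterion of Proposition~\ref{P-2.5}: Cauchy--Schwarz for $g_{y_0}$ shows that every realized pair $(s_0,\|\beta\|_{g_{y_0}})$ is admissible for \eqref{E-phi-cond} (and specializing $b$ to $|s_0|$ recovers \eqref{E-Yava1} from \eqref{E-phi-cond}, so both hypotheses of that proposition are met), while for the converse the Euclidean norm realizes every admissible pair, so a failure of \eqref{E-phi-cond} yields an explicit $(\alpha,\beta)$ with $\bar g_{y_0}$ not positive definite. The paper instead gives a self-contained argument that never invokes the ``if and only if'' of Proposition~\ref{P-2.5}: it computes $\det\bar g^{\,t}_y$ explicitly via Lemma~\ref{L-det-F} for the homotopy $\phi_t=1-t+t\phi$, observes that this determinant never vanishes while $\bar g^{\,0}_y=g_y$ is positive definite, and concludes positive definiteness for $t=1$ by connectedness; the converse is read off from the sign of the same determinant, with a separate parity discussion for odd $n$. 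Your version is shorter and makes the logical dependence on \cite{js1} explicit (the paper itself remarks that the corollary ``is a consequence of Proposition~\ref{P-2.5}'' and is proved only for the reader's convenience); the paper's version buys the determinant identity \eqref{E-F001e}, which is reused elsewhere, and avoids the external pointwise criterion. The one point to make precise is the one you already flag: your implication ``$\Leftarrow$'' needs $\|\beta\|_{g_{y}}<b_0$ for every $y\ne0$, which is immediate under your reading $F(\beta)=\sup_{y\ne0}\|\beta\|_{g_y}$ but would need an extra step if $F(\beta)$ were the dual norm $\sup_{F(y)=1}\beta(y)$; since the paper's own formula \eqref{E-F001e} likewise uses the $y$-dependent quantity $\|\beta\|^2_{g_y}$ as its ``$b^2$'', this looseness is inherited from the source rather than introduced by you.
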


\proof
Assume that \eqref{E-phi-cond} is satisfied.
Taking $s\to b$ in (\ref{E-phi-cond}), we see that
\begin{equation}\label{E-phi-cond2}
 \phi-s\,\dot\phi>0
\end{equation}
for any $s$ with $|s| < b_0$.
Consider the following families of functions and metrics:
\begin{equation*}
 \phi_t(s) :=1 - t + t\phi(s),\quad
 \bar F_t=F\phi_t(\beta/F),\quad
 \bar g_y^{\,t}:\ \bar g_{ij}^{\,t}=(1/2)[\bar F^2_t]_{y^i\,y^j}.
\end{equation*}
Note that for any $t\in[0,1]$ and any $s, b$ with $|s| \le b < b_0$,
\begin{eqnarray*}
 && \phi_t - s\dot\phi_t = 1 - t + t(\phi - s\dot\phi) > 0,\\
 &&
 \phi_t - s\dot\phi_t + (b^2 - s^2)\,\ddot\phi_t = 1 - t + t\{\phi - s\dot\phi + (b^2 - s^2)\,\ddot\phi\} > 0.
\end{eqnarray*}
By Lemma~\ref{L-det-F}, for $\bar F_t = F\phi_t(\beta/F)$ we find a formula (as in the case of $F=\alpha$, see \cite{shsh2016})
\begin{equation*}
  \det \bar g^{\,t}_y = \phi_t^{n+1}(\phi_t-s\,\dot\phi_t)^{n-2}[\phi_t-s\,\dot\phi_t +(b^2-s^2)\,\ddot\phi_t]\det g_y.
\end{equation*}
Thus, $\det\bar g_y^{\,t} > 0$ for all $t\in[0,1]$, see also \eqref{E-F001e} in what follows.
Since $\bar g^{\,0}_{y}=g_y$ is positive definite, then $\bar g^{\,t}_y$ is positive definite for any $t\in[0,1]$.
Hence, $\bar F_t$ is a Minkowski norm for any $t\in[0,1]$.
In particular, $\bar F=\bar F_1$ is a Minkowski norm.

Conversely, assume that $\bar F = F\phi(\beta/F)$ is a Minkowski norm for any
Minkowski norm $F$ and 1-form $\beta$ with $b := F(\beta) < b_0$.
Then $\phi(s)>0$ for any $s$ with $|s| < b_0$.
If $n$ is even, then $\det\bar g_y > 0$ implies that \eqref{E-phi-cond} holds for any $s$ with $|s| \le b$.
If $n>1$ is odd, then $\det\bar g_y > 0$ implies that the inequality $\phi(s)-s\dot\phi(s)\ne0$ holds for any $s$ with $|s| \le b$.

Since $\phi(0)>0$, the above inequality implies that \eqref{E-phi-cond2} holds
for any $s$ with $|s|<b$. Since $b$ with $0\le b\le b_0$ is arbitrary,
we conclude that \eqref{E-phi-cond2} holds for any $s$ with $|s|<b_0$.
Finally, we can see that $\det\bar g_y > 0$ implies that \eqref{E-phi-cond} holds for any $s$ and $b$ with $|s| \le b < b_0$.
\hfill$\square$

\begin{definition}[see Definition~\ref{D-01} for $p=1$]
\rm
Given a smooth positive function $\phi:(-b_0, b_0)\to\RR$ satisfying \eqref{E-phi-cond} and a 1-form $\beta$ on $\RR^n$,
the $T_{\beta,\phi}$-\textit{deformation} of a Minkowski norm $F$ on $\RR^n$ is the Minkowski norm
 $\bar F=F\phi(\beta/F)$.
Functions \eqref{E-n-rhos} for $p=1$ become functions of one variable $s=\beta/F$,
defined by the same formulas as for $(\alpha,\beta)$-norm in \cite{shsh2016}:
\begin{equation*}
 \rho = \phi(\phi-s\,\dot\phi),\quad
 \rho_{0} = \phi\,\ddot\phi+\dot\phi^2,\quad
 \rho_{1} = \phi\,\dot\phi -s(\phi\,\ddot\phi+\dot\phi^2).
\end{equation*}
\end{definition}

In a similar way, one can define $T_{\beta,\phi}$-\textit{deformation}
of a convex body in $\RR^n$ given by $\{F\le1\}$.
Put
 $p_{y}=\beta^{\sharp y}-s y/F(y)$,
where $\beta^{\sharp y}$ is defined by equality $g_y(\beta^{\sharp y},u)=\beta(u)$.

Formulas \eqref{E-c-value0} and \eqref{E-Ctorsion2}
for $p=1$, i.e., $\bar F=F\phi(\beta/F)$, generalize result on $(\alpha,\beta)$-norm in \cite{shsh2016}:
\begin{eqnarray}\label{E-c-value0-1}
\nonumber
 \bar g_y(u,v) \eq \rho\,g_y(u,v) +\rho_0\beta(u)\beta(v) +\rho_1(\beta(u) g_y(y,v) +\beta(v) g_y(y,u))/F(y) \\
 \minus \rho_1\beta(y) g_y(y,u)\,g_y(y,v)/F^{3}(y),\\
\label{E-c-value0-2}
\nonumber
 2\,\bar C_y(u,v,w) \eq 2\,\rho\,C_y(u,v,w)
 + (3\dot\phi\,\ddot\phi+\phi\,\dddot\phi) g_y(p_{y},u)\,g_y(p_{y},v)\,g_y(p_{y},w)/F(y)\\
 \plus\rho_1\big( K_y(u,v) g_y(p_y,w) +K_y(v,w) g_y(p_y,u) +K_y(w,u) g_y(p_y,v)\big)/F(y) .
\end{eqnarray}
By~\eqref{E-c-value0-2},
$T_{\beta,\phi}$-deformation changes Cartan torsion
adding a semi-C-reducible component.

Set~$\tilde Y = s^{-1}p_{y} = s^{-1}\beta^{\sharp y}-y/F(y)$ and $\eps=s\rho_1$, see \eqref{E-Y-eps-value}.
Then \eqref{E-c-value0-1} takes the equivalent form
\begin{equation}\label{E-c-value4}
 \bar g_y(u,v) = \rho\,g_y(u,v) + (\rho_0+\rho_1^2/\eps)\,\beta(u)\,\beta(v) -\eps\,g_y(\tilde Y,u)\,g_y(\tilde Y,v).
\end{equation}
From \eqref{E-c-value4} and Lemma~\ref{L-det-F} we get
the relation for the volume form ${\rm d}\vol_{\bar g_y}=\sigma_{y}\,{\rm d}\vol_{g_y}$:
\begin{eqnarray}\label{E-F001e}
\nonumber
 \sigma_{y} \eq \rho^{n-2}( \rho_0\rho_1 s^3+\rho_1^2 s^2 +(\rho-\rho_0b^2)\rho_1 s +(\rho\rho_0-\rho_1^2)b^2 +\rho^2)\\
 \eq \phi^{n+1}(\phi-s\,\dot\phi)^{n-2}[\,\phi-s\,\dot\phi+(b^2-s^2)\,\ddot\phi\,].
\end{eqnarray}

\begin{example}\rm
Let the indicatrix of Minkowski norm $\bar F$ in $(\RR^n,\alpha)$ be a unit sphere shifted by vector $d_1 e_1$ with $|d_1|<1$.
Then $\bar F$ has $(\alpha,\beta)$-type.
Indeed, assuming
$\bar F=\alpha\,\phi(\beta/\alpha)$, we get
\[
 \big(\sum\nolimits_{i=1}^n y_i^2\big)^{1/2}\,\phi\big(d\,y_1/\big(\sum\nolimits_{i=1}^n y_i^2\big)^{1/2}\,\big)
 =\big((y_1-d)^2+\sum\nolimits_{i=2}^n y_i^2\big)^{1/2},
\]
where $\beta(y)=d\,y_1$.
Put $s=d\,y_1/(\sum_{i} y_i^2)^{1/2}$. Assuming $(y_1-d)^2+\sum_{i=2}^n y_i^2=1$, we get $d\,y_1=s(s+(s^2+1-d^2)^{1/2})$.
Then we find $\phi(s)=1/(s+(s^2+1-d^2)^{1/2})$.
Similar result $\bar F=F\,\phi(\beta/F)$ we get for $F=(y_1^2+\psi^2(y_2,\ldots,y_n))^{1/2}$ with arbitrary function $\psi$,
see Fig.~\ref{F-shift} for $n=2$ and $\psi(y_2)=y_2^4$.
\begin{figure}[ht]
\begin{center}
\includegraphics[scale=.55,angle=0]{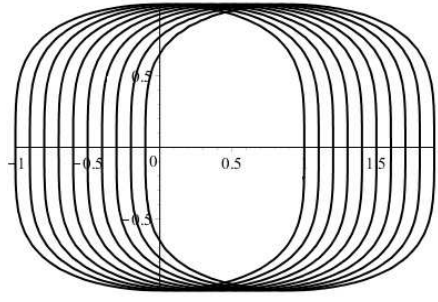}
\end{center}
\caption{Indicatrix of $\bar F$ in $\RR^2$ by $d e_1$-shift for $10d=0,\ldots,9$.\label{F-shift}}
\end{figure}
\end{example}


\begin{proposition}\label{C-3.3}
Let a Minkowski norm $F$ in $\RR^n$ can be deformed to the Euclidean norm $\alpha$
in $p<n$ steps of $T_{\beta,\phi}$-deformations with $p$ linearly independent 1-forms. Then indicatrix of $F$ has a $p$-dimensional axis of rotation.
\end{proposition}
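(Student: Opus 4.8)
The plan is to reduce the statement to Proposition~\ref{P-symmetry}. Write the hypothesis as a chain $F_0=F$, $F_i=T_{\beta_i,\phi_i}(F_{i-1})$ for $i=1,\dots,p$, with $F_p=\alpha$ and $\beta_1,\dots,\beta_p$ linearly independent 1-forms. It suffices to exhibit, in suitable affine coordinates, a subgroup $G=\{\,\mathrm{diag}(C,\id_p):C\in O(n-p,\RR)\,\}$ of $GL(n,\RR)$ that leaves $F$ invariant; then Proposition~\ref{P-symmetry} shows that $F$ is an $(\alpha,\vbeta)$-norm, so its indicatrix is a rotation hypersurface whose axis is the $p$-plane fixed by $G$.

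\emph{The group.} Realize $\alpha$ in coordinates where it is the standard Euclidean norm. Put $W=\bigcap_{i=1}^p\ker\beta_i$; since the $\beta_i$ are independent, $\dim W=n-p$. Let $U$ be the $\alpha$-orthogonal complement of $W$, so $\dim U=p$. As $\beta_i$ vanishes on $W$, the vectors $\beta_i^{\sharp}$ (with respect to $\alpha$) lie in $U$ and, being independent, span $U$; thus $U=\mathrm{span}\{\beta_1^{\sharp},\dots,\beta_p^{\sharp}\}$. Choose $\alpha$-orthonormal coordinates $(y^1,\dots,y^n)$ with $W=\{y^{n-p+1}=\dots=y^n=0\}$ and set $G=\{\,\mathrm{diag}(C,\id_p):C\in O(n-p,\RR)\,\}$. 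Every $g\in G$ is an $\alpha$-isometry, so $\alpha$ is $G$-invariant; moreover $g$ preserves $W$ and fixes $U$ pointwise, hence for $y=w+u$ with $w\in W$, $u\in U$ we get $\beta_i(gy)=\beta_i(gw)+\beta_i(u)=\beta_i(u)=\beta_i(y)$, i.e.\ each $\beta_i$ is $G$-invariant.

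\emph{Transfer of invariance.} Suppose a 1-form $\beta$ is $G$-invariant, and $\bar H=H\,\phi(\beta/H)$ where $H,\bar H$ are Minkowski norms; we show that $G$-invariance of $\bar H$ implies $G$-invariance of $H$. Fix $g\in G$ and $y$, put $c=\beta(y)=\beta(gy)$ and $\Phi(t)=t\,\phi(c/t)$. Then $\bar H(y)=\Phi(H(y))$ and $\bar H(gy)=\Phi(H(gy))$, while $\Phi'(t)=\phi(s)-s\,\dot\phi(s)>0$ for $s=c/t$ by \eqref{E-p-cond1}; hence $\Phi$ is injective, and $\bar H(gy)=\bar H(y)$ forces $H(gy)=H(y)$. (This is essentially the invertibility of $T_{\beta,\phi}$, cf.\ Proposition~\ref{prop:compos}.) Running the chain $F_i=T_{\beta_i,\phi_i}(F_{i-1})$ backwards from $F_p=\alpha$, which is $G$-invariant, we successively obtain $G$-invariance of $F_{p-1},\dots,F_0=F$. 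By Proposition~\ref{P-symmetry}, $F$ is an $(\alpha,\vbeta)$-norm, so its indicatrix is a rotation hypersurface with the $p$-dimensional axis $U$.

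The crux is the choice of $G$: it must make $\alpha$ \emph{and} all of $\beta_1,\dots,\beta_p$ invariant at once, which forces $G$ to fix the $p$-plane $U=\mathrm{span}\{\beta_1^{\sharp},\dots,\beta_p^{\sharp}\}$ pointwise while acting orthogonally on $W=U^{\perp}$. Given that, the transfer of invariance is elementary, the only analytic input being the monotonicity $\phi-s\,\dot\phi>0$ from \eqref{E-p-cond1} (which is also what lets each step of the chain be run backwards); and the intermediate norms $F_i$ are Minkowski by hypothesis, so Proposition~\ref{P-symmetry} applies at the end.
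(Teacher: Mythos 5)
Your proof is correct, and it reaches the same endpoint (Proposition~\ref{P-symmetry}) as the paper, but by a different mechanism. The paper first invokes Theorem~\ref{prop:inverse-o} to invert the chain, so that $\alpha$ is deformed to $F$ in $p$ steps, and then asserts that after the first step the indicatrix has a one-dimensional axis of rotation and that each further step enlarges the axis by one --- in effect, that the composition of the $p$ inverse deformations exhibits $F$ as an $(\alpha,\vbeta)$-norm. You instead keep the chain in its original direction, fix once and for all the group $G$ of rotations acting on $W=\bigcap_i\ker\beta_i$ and fixing $U=\mathrm{span}\{\beta_1^\sharp,\dots,\beta_p^\sharp\}$ pointwise, and propagate $G$-invariance backwards from $F_p=\alpha$ to $F_0=F$ via the strict monotonicity of $\Phi(t)=t\,\phi(c/t)$, whose derivative $\phi-s\dot\phi$ is positive precisely by the standing assumption \eqref{E-p-cond1}. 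This buys two things: it avoids constructing the inverse deformations explicitly, and it supplies the justification that the paper leaves implicit for why the rotational symmetry survives every step. Two minor points: the injectivity of $\Phi$ deserves the remark that its domain $\{t>0:\ |c/t|<\delta\}$ is an interval, so positivity of $\Phi'$ does give injectivity; and your parenthetical attribution of invertibility to Proposition~\ref{prop:compos} should instead point to Theorem~\ref{prop:inverse-o} (Proposition~\ref{prop:compos} only concerns composition of deformations with the same $\vbeta$).
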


\proof
By conditions and Theorem~\ref{prop:inverse-o}, Euclidean norm $\alpha$ can be deformed to $F$ in $p<n-1$ steps
(of  $T_{\beta,\phi}$-deformations).
After the first step, the indicatrix has 1-dimensional axis of rotation,
and each step increases the dimension of axis by one.
\hfill$\Box$

\section{The equivalence relation for Minkowski norms}

We use compositions of $T_{\vbeta,\phi}$-deformations (e.g. for $p=1$) to define and study an ``equiva\-lence relation"
on the set of all Minkowski norms in $\RR^n$.

\subsection{General case}

The following theorem shows that our deformations are invertible with the same structure.

\begin{thm}\label{prop:inverse-o}
For any $T_{\vbeta,\phi}$-deformation of $F$ to $\bar F$ (Minkowski norms in $\RR^n$) satisfying \eqref{E-p-cond1} there exists inverse $T_{\vbeta,\psi}$-deformation of $\bar F$ to $F$ (with the same $\vbeta$) satisfying $\psi - \sum_it_i\,\dot\psi_i>0$.
In particular, any $(\alpha,\vbeta)$-norm can be $T_{\vbeta,\phi}$-deformed in one step to Euclidean norm $\alpha$.
\end{thm}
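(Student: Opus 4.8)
The plan is to realize the inverse deformation explicitly by inverting the scalar relation between $F$ and $\bar F$ along each ray. Write $\bar F = F\,\phi(s)$ with $s_i = \beta_i/F$; multiplying $s_i$ by $F$ and dividing by $\bar F$ gives the natural candidate for the new slope variables $t_i = \beta_i/\bar F = s_i/\phi(s)$. So the first step is to study the map $\Phi\colon s \mapsto t = s/\phi(s)$ on a neighborhood of the origin in $\RR^p$ and show it is a diffeomorphism onto a neighborhood of the origin. Since $\phi(0)>0$ and $\phi$ is smooth, $\Phi(0)=0$ and the Jacobian of $\Phi$ at $0$ is $\phi(0)^{-1}\,\id_p$, which is invertible; by the inverse function theorem $\Phi$ has a smooth local inverse $s = \Psi(t)$ near $0$, after possibly shrinking $\delta$ (and hence, via Proposition~\ref{P-03}, after possibly shrinking the admissible norms of the $\beta_i$). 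Then I define $\psi(t) := 1/\phi(\Psi(t))$, a smooth positive function of $p$ variables near $0$ with $\psi(0) = 1/\phi(0)$.

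The second step is to verify that $T_{\vbeta,\psi}(\bar F) = F$. With $t_i = \beta_i/\bar F$ we have $\bar F\,\psi(t) = \bar F / \phi(\Psi(t))$; it remains to check $\Psi(t) = s$, i.e. that the $t_i$ actually computed from $\bar F$ coincide with $\Phi(s)$. But $t_i = \beta_i/\bar F = \beta_i/(F\phi(s)) = s_i/\phi(s) = \Phi(s)_i$, so indeed $\Psi(t) = s$ and $\bar F\,\psi(t) = \bar F/\phi(s) = F$. This gives $T_{\vbeta,\psi}\circ T_{\vbeta,\phi} = \id$ on the relevant class of norms; alternatively one can invoke Proposition~\ref{prop:compos} directly, checking that the composed function $\phi(s)\,\psi(s/\phi(s)) = \phi(s)/\phi(\Psi(\Phi(s))) = \phi(s)/\phi(s) = 1$.

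The third step is to confirm that $\psi$ satisfies the standing positivity hypothesis $\psi - \sum_i t_i\,\dot\psi_i > 0$, so that Proposition~\ref{P-03} (applied to $\psi$) guarantees $\bar{\bar F} := T_{\vbeta,\psi}(\bar F) = F$ is a genuine Minkowski norm with positive-definite fundamental tensor, and so that the inverse deformation is itself of the admissible type. The cleanest route is to read this off from formula \eqref{E-c-value0}: applying it to $T_{\vbeta,\psi}$ we get $\bar{\bar g}_y = \bar\rho\,\bar g_y + (\text{terms})$ where $\bar\rho = \psi(\psi - \sum_i t_i\dot\psi_i)$ is the ``$\rho$'' built from $\psi$; since $\bar{\bar g}_y = g_y$ must be positive definite and the correction terms vanish when all $\beta_i = 0$ (take $y$ on the $F$-sphere and let $\beta_i\to 0$), we conclude $\bar\rho > 0$, hence $\psi - \sum_i t_i\dot\psi_i > 0$ near $0$, after possibly shrinking $\delta$ once more. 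The final sentence of the theorem is then immediate: an $(\alpha,\vbeta)$-norm is by definition $T_{\vbeta,\phi}(\alpha)$ for some $\phi$, so its inverse $T_{\vbeta,\psi}$-deformation carries it back to $\alpha$ in one step.

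The main obstacle is keeping the various shrinkings of $\delta$ consistent and uniform: one needs $\Phi$ to be invertible, one needs $F(\beta_i) < \delta$ to survive the change from $F$ to $\bar F$ (note $\bar F$ and $F$ are comparable since $\phi$ is bounded above and below near $0$, so $\bar F(\beta_i) = F(\beta_i)\cdot(\text{something bounded})$, which is the one place a little care is needed), and one needs $\psi$ to satisfy \eqref{E-p-cond1}. All of these hold on a common neighborhood of $0$ because every obstruction is an open condition that holds at $s = 0$, but the write-up should make the order of quantifiers explicit rather than leaving it implicit.
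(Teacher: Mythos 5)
Your construction is the same as the paper's: the slope map $\Phi(s)=s/\phi(s)$, the inverse $\Psi=\Phi^{-1}$, and $\psi(t)=1/\phi(\Psi(t))$, with the composition identity $\phi(s)\,\psi(s/\phi(s))=1$ verified via Proposition~\ref{prop:compos}. The verification that $T_{\vbeta,\psi}(\bar F)=F$ is correct.

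However, there is a genuine gap in how you invert $\Phi$. You apply the inverse function theorem at $s=0$ and then repeatedly ``shrink $\delta$'', which proves only that the inverse deformation exists when the $F(\beta_i)$ are sufficiently small. The theorem asserts more: the inverse exists for \emph{any} given deformation satisfying \eqref{E-p-cond1}, and the values $s=\vbeta(y)/F(y)$ attained on the $F$-unit sphere need not lie in a small neighborhood of the origin (for the Randers deformation $\phi(s)=1+s$ one has $\phi-s\dot\phi=1>0$ everywhere while $s$ ranges over $[-b,b]$ with $b=F(\beta)$ possibly close to $1$). Shrinking $\delta$ is not available because the deformation is prescribed, and the ``in particular'' clause about $(\alpha,\vbeta)$-norms needs the inverse for that prescribed deformation. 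The paper closes this by observing that $\Phi$ preserves each ray through the origin and that its radial derivative along a ray equals $(\phi-\sum_i s_i\dot\phi_i)/\phi^2$, which is positive wherever \eqref{E-p-cond1} holds; hence $\Phi$ is strictly monotone on each ray and therefore injective on the whole domain, not just near $0$. (Note that invertibility of the Jacobian alone, even everywhere, would not give global injectivity; the ray-preservation is essential.) The same radial computation applied to $\Psi(t)=t/\psi(t)$ yields $\psi-\sum_i t_i\dot\psi_i=\psi^2\big(\phi-\sum_i s_i\dot\phi_i\big)^{-1}\phi^{2}\cdot\phi^{-2}>0$ on the entire image, replacing your continuity-near-zero argument in the third step, which again only gives the inequality locally. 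With the IFT step replaced by this monotonicity-along-rays argument, your proof coincides with the paper's.
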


\proof
Let $\bar F=F\phi(\vbeta/F)$ be our deformation, where $\phi:\Pi\to(0,\infty)$ obeys \eqref{E-p-cond1}.
We~are looking for inverse $T_{\vbeta,\psi}$-deformation (whose image of $\bar F$ is $F$,
i.e., $F=\bar F\psi(\vbeta/\bar F)$). Thus,
 $\psi\big({s}/{\phi({s})}\,\big) = 1/{\phi({s})}$,
where ${s}=\vbeta/F$. The mapping $\Phi({s}) = {s}/\phi({s})$ moves points along the rays through the origin.
To show that $\Phi(\lambda\,{s})$ is monotone in $\lambda$, we calculate the derivative
\[
 \frac d{d\lambda}\,\Phi((1+\lambda){s}/\phi((1+\lambda){s}))|_{\,\lambda=0}
 =({\phi-\sum\nolimits_{\,i} s_i\,\dot\phi_i})/{\phi^2} \overset{\eqref{E-p-cond1}} > 0.
\]
Hence, there exists mapping $\Phi^{-1}$ and the function
 $\psi({t}) = 1/\phi\circ\Phi^{-1}({t})$
is uniquely defined on a certain domain.
 The mapping $\Psi({t}) = {t}/\psi({t})$ moves points along the rays through the origin.
 Observe that $\Psi(\lambda\,{t})$ is monotone in $\lambda$,
 and has positive derivative,
\[
 0 < \frac d{d\lambda}\,\Psi((1+\lambda){t}/\psi((1+\lambda){t}))|_{\,\lambda=0} =({\psi-\sum\nolimits_{\,i} t_i\,\dot\psi_i})/{\psi^2}.
\]
Hence, condition \eqref{E-p-cond1} is satisfied for $\psi$ of $t=\vbeta/\bar F$.
\hfill$\square$

\begin{definition}\label{Def-rel-p}
\rm
We write $\bar F\overset{p}\sim F$ for Minkowski norms in $\RR^n$ and $p\le n$ if there are $T_{\vbeta^i,\phi_i}$-deformations $(i\le m)$ with $\vbeta^i$ of length $\le p$ such that $F_1=F\phi_1(\frac{\vbeta^1}{F}), \ldots, \bar F=F_{m-1}\phi_m(\frac{\vbeta^m}{F_{m-1}})$.~Set
\[
 [F]_p=\{\bar F\in{\rm Mink}^n: \bar F\overset{p}\sim F\}.
\]
\end{definition}

\begin{proposition}
The  relation $\bar F\overset{p}\sim F$ is an equivalence relation on the set of Minkowski norms.
\end{proposition}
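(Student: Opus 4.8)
The plan is to verify the three axioms of an equivalence relation directly from Definition~\ref{Def-rel-p}, using the identity deformation for reflexivity, Theorem~\ref{prop:inverse-o} for symmetry, and concatenation of deformation sequences for transitivity. First I would handle \emph{reflexivity}: taking $m=1$, $\vbeta^1$ any fixed $1$-form of sufficiently small $F$-norm (length $1\le p$), and $\phi_1\equiv 1$, we have $F\phi_1(\vbeta^1/F)=F$, and $\phi_1$ trivially satisfies \eqref{E-p-cond1} since $\phi_1-\sum_i s_i\dot\phi_{1,i}=1>0$; hence $F\overset{p}\sim F$. (Alternatively one may simply allow $m=0$, the empty sequence, in Definition~\ref{Def-rel-p}.)

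Next, \emph{symmetry}: suppose $\bar F\overset{p}\sim F$ via a chain $F_0=F,\ F_1=F_0\phi_1(\vbeta^1/F_0),\ \ldots,\ F_m=\bar F$, each $\phi_i$ with $\vbeta^i$ of length $\le p$ and satisfying the positivity hypothesis \eqref{E-p-cond1}. By Theorem~\ref{prop:inverse-o}, each single step $F_{i-1}\mapsto F_i$ admits an inverse $T_{\vbeta^i,\psi_i}$-deformation carrying $F_i$ back to $F_{i-1}$, with the \emph{same} $\vbeta^i$ (so again of length $\le p$) and with $\psi_i$ satisfying $\psi_i-\sum_j t_j\dot\psi_{i,j}>0$, i.e.\ \eqref{E-p-cond1} holds for $\psi_i$. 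Reading these inverse steps in reverse order, $\bar F=F_m\mapsto F_{m-1}\mapsto\cdots\mapsto F_0=F$, exhibits a valid chain of $T_{\vbeta,\phi}$-deformations realizing $F\overset{p}\sim\bar F$.

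Finally, \emph{transitivity}: if $\bar F\overset{p}\sim F$ by a chain of length $m$ from $F$ to $\bar F$, and $\tilde F\overset{p}\sim\bar F$ by a chain of length $m'$ from $\bar F$ to $\tilde F$, then the concatenation of the two chains (the first ending at $\bar F$, the second starting at $\bar F$) is a chain of length $m+m'$ of $T_{\vbeta^i,\phi_i}$-deformations from $F$ to $\tilde F$, all with $\vbeta^i$ of length $\le p$ and all $\phi_i$ satisfying \eqref{E-p-cond1}; hence $\tilde F\overset{p}\sim F$. The only point requiring any care is that the definition must permit composing sequences of arbitrary finite length and that each intermediate $F_{i-1}$ is again a Minkowski norm so that the next deformation is well-defined; this is exactly what Proposition~\ref{P-03} (or Proposition~\ref{P-2.5} in the $p=1$ case) guarantees once the $F$-norms of the involved $1$-forms are small, and it is built into Definition~\ref{Def-rel-p}. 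So there is no real obstacle here: the statement is essentially a bookkeeping consequence of Theorem~\ref{prop:inverse-o} together with closure under composition, and the mildly delicate part is simply making sure the class of admissible $\phi$'s (those obeying \eqref{E-p-cond1}) is closed under taking inverses, which is precisely the content of Theorem~\ref{prop:inverse-o}.
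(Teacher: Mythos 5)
Your proof is correct and follows essentially the same route as the paper, which simply invokes Theorem~\ref{prop:inverse-o} for reflexivity, symmetry and transitivity; you have merely spelled out the bookkeeping (identity deformation $\phi\equiv1$, reversed inverse chain, concatenation of chains) that the paper leaves implicit.
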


\proof
By
Theorem~\ref{prop:inverse-o}, the relation $\overset{p}\sim$ is reflexive, symmetric and transitive.
\hfill$\Box$


\begin{remark}\rm
One may apply the equivalence relation $\overset{p}\sim$ of Minkowski norms to Finsler metrics.
For two Finsler metrics on $M$, we write $F\overset{p}\sim F'$, if $F_x\overset{p}\sim F'_x$ for all $x\in M$.
\end{remark}

\begin{proposition}\label{Prop-equivalence2}
 The class $[\alpha]_p$ on $(\RR^n,\alpha)$ is invariant under ``rotations" (by orthogonal matrices) and homotheties.
\end{proposition}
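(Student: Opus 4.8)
The plan is to show that if $\bar F \overset{p}\sim \alpha$ and $\Lambda \in GL(n,\RR)$ is either an orthogonal matrix or a homothety $\lambda\,\id$, then $\bar F \circ \Lambda \overset{p}\sim \alpha$. Since $\overset{p}\sim$ is an equivalence relation and $\alpha \circ \Lambda = \alpha$ for orthogonal $\Lambda$ (resp. $\alpha \circ (\lambda\id) = \lambda\,\alpha \overset{p}\sim \alpha$, the last relation holding because $\lambda\,\alpha = \alpha\cdot\phi$ with the constant function $\phi \equiv \lambda$, which trivially satisfies \eqref{E-p-cond1}), it suffices to prove that precomposition with $\Lambda$ carries a single $T_{\vbeta,\phi}$-deformation to another $T_{\vbeta',\phi}$-deformation with $\vbeta'$ of the same length, and then iterate along the chain $F_1,\ldots,F_{m-1},\bar F$ witnessing $\bar F \overset{p}\sim \alpha$.

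The key computation is the commutation identity: if $\bar F = F\cdot\phi(s_1,\ldots,s_p)$ with $s_i = \beta_i/F$, then for any $\Lambda \in GL(n,\RR)$,
\[
 (\bar F\circ\Lambda)(y) = F(\Lambda y)\cdot\phi\Big(\tfrac{\beta_1(\Lambda y)}{F(\Lambda y)},\ldots,\tfrac{\beta_p(\Lambda y)}{F(\Lambda y)}\Big)
 = (F\circ\Lambda)(y)\cdot\phi\Big(\tfrac{(\Lambda^*\beta_1)(y)}{(F\circ\Lambda)(y)},\ldots,\tfrac{(\Lambda^*\beta_p)(y)}{(F\circ\Lambda)(y)}\Big),
\]
so $\bar F\circ\Lambda = T_{\Lambda^*\vbeta,\phi}(F\circ\Lambda)$, where $\Lambda^*\vbeta = (\Lambda^*\beta_1,\ldots,\Lambda^*\beta_p)$. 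Since $\Lambda^*$ is a linear isomorphism on the space of $1$-forms, the $\Lambda^*\beta_i$ remain linearly independent, so $\Lambda^*\vbeta$ has length $p$. The admissibility condition \eqref{E-p-cond1} is a condition on $\phi$ alone (not on $F$ or $\vbeta$), hence it is preserved; and the norm bound $F(\beta_i)<\delta_i$ transfers because $(F\circ\Lambda)(\Lambda^*\beta_i) = F(\beta_i)$ when we use the $F\circ\Lambda$-norm — more carefully, one invokes Proposition~\ref{P-03} (or Theorem~\ref{prop:inverse-o} for the backward direction) to guarantee that the relevant $1$-forms are small enough, noting that for a homothety $\Lambda^*$ rescales forms by $\lambda$, which we absorb by shrinking $\delta$. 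Thus each link $F_{k-1} \mapsto F_k$ in the chain for $\bar F$ becomes a link $F_{k-1}\circ\Lambda \mapsto F_k\circ\Lambda$, and chaining gives $\bar F\circ\Lambda \overset{p}\sim \alpha\circ\Lambda$.

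Finally, closing the loop: $\alpha\circ\Lambda = \alpha$ for $\Lambda$ orthogonal, so $\bar F\circ\Lambda \overset{p}\sim \alpha$, i.e. $\bar F\circ\Lambda \in [\alpha]_p$; for $\Lambda = \lambda\,\id$, $\alpha\circ\Lambda = \lambda\alpha$ and one extra $T_{\vbeta,\phi}$-step with constant $\phi\equiv\lambda^{-1}$ (or $\lambda$, for the other direction) and any fixed $\vbeta$ of length $\le p$ returns us to $\alpha$, so again $\bar F\circ\Lambda \in [\alpha]_p$. This establishes invariance of $[\alpha]_p$ under the stated group.

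The main obstacle I anticipate is bookkeeping of the admissibility bounds rather than anything conceptual: one must be careful that the small-norm hypotheses $F(\beta_i)<\delta_i$ required by Definition~\ref{D-01} survive precomposition — for an orthogonal $\Lambda$ this is automatic since $\Lambda^*$ is an isometry of the relevant norms, but for a homothety the forms rescale, and one should either appeal to Proposition~\ref{P-03} (which produces a uniform $\delta$ for all Minkowski norms) or observe that a homothety can be handled by the single trivial constant-$\phi$ deformation noted above, sidestepping the issue entirely. A secondary subtlety is that Definition~\ref{Def-rel-p} allows each link to use a different $\vbeta^i$ of length $\le p$; since $\Lambda^*$ preserves lengths of tuples of linearly independent forms, this causes no trouble.
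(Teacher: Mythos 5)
Your proposal is correct and follows essentially the same route as the paper: the commutation identity $\bar F\circ\Lambda = T_{\Lambda^*\vbeta,\phi}(F\circ\Lambda)$ with pulled-back $1$-forms handles orthogonal transformations (the paper writes $\bar\beta_i=\beta_i\circ A$ and $\bar F_1(y)=F_1(A(y))$), and homotheties are absorbed via the constant deformation $\phi\equiv\lambda$ together with the rescaling $\vbeta\mapsto\lambda\vbeta$ of the forms along the chain. Your extra care about the admissibility bounds $F(\beta_i)<\delta_i$ under precomposition is a reasonable refinement the paper leaves implicit.
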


\proof
For $\phi\equiv\lambda\in\RR_+$ we obtain $\bar F=\lambda\,F$; thus, $\lambda\,F\overset{p}\sim F$ for any positive $\lambda$.
If $\bar F$ is the image of $F$ under $T_{\vbeta,\phi}$-deformation, then
$\lambda\,\bar F$ is the image of $\lambda\,F$ under $T_{\lambda\,\vbeta,\phi}$-deformation.

Let $A\in O(n)$ be an orthogonal matrix. If
\[
 F_1=\alpha\,\phi_1((\beta_1,\ldots,\beta_p)/\alpha),\quad
 \bar F_1=\alpha\,\phi_1((\bar\beta_1,\ldots,\bar\beta_p)/\alpha),
\]
where $\bar\beta_i = \beta_i\circ A$, then $\bar F_1(y)= F_1(A(y))$ for all $y$. Similarly, for a set of transformations.
\hfill$\Box$

\begin{thm}\label{T-03}
Let $F$ and $\bar F=T_{\vbeta,\phi}(F)$ be Minkowski norms in $\RR^n\ (n\ge3)$,
where $\phi\ne1$ is a smooth positive function of $p<n$ variables,
and $\vbeta=(\beta_1,\ldots,\beta_p)$ is a set of linearly independent 1-forms.
If $\tilde F = |\bar F^2- F^2|^{1/2}$ is a C-reducible Minkowski norm,
then $F=T_{\vbeta,\phi_1}(\tilde F)$ and $\bar F=T_{\vbeta,\phi_2}(\tilde F)$
for some $\phi_1(s)$ and $\phi_2(s)=\phi_1(s)\phi(s/{\phi_1(s)})$, where $\tilde F$
is a Randers norm in $\RR^n$.
\end{thm}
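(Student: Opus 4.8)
The plan is to recognize $\tilde F$ as a $T_{\vbeta,\chi}$-deformation of $F$, to invert and compose it, and to use the Matsumoto classification for the final assertion. With $s=\vbeta/F$ one has $\tilde F^2=|\bar F^2-F^2|=F^2\,|\phi^2(s)-1|$, hence $\tilde F=F\,\chi(\vbeta/F)$ where $\chi:=\sqrt{|\phi^2-1|}$. Since $\tilde F$ is a Minkowski norm, it is smooth and strictly positive on $\RR^n\setminus\{0\}$, so $\chi>0$ on its domain; in particular $\phi^2-1$ vanishes nowhere, its sign is constant, and $\chi$ is a well-defined smooth positive function of $p$ variables, so $\tilde F$ has the form \eqref{E-ab-def}. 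One then checks that $\chi$ is admissible, i.e. that \eqref{E-p-cond1} holds for $\chi$: by \eqref{En-F2} this is equivalent to $[\tilde F^2/2]_F=F\,\chi\,(\chi-\sum_i s_i\dot\chi_i)>0$, and it follows from positive definiteness of the fundamental tensor $\tilde g_y$ of $\tilde F$, reasoning as in the remark preceding \eqref{E-c-value0} (after rescaling the $\beta_i$ so that the relevant values of $s$ become small). Thus $\tilde F=T_{\vbeta,\chi}(F)$ is a genuine deformation.

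Since $T_{\vbeta,\chi}$ satisfies \eqref{E-p-cond1}, Theorem~\ref{prop:inverse-o} provides an inverse $T_{\vbeta,\phi_1}$-deformation of $\tilde F$ to $F$; hence $F=T_{\vbeta,\phi_1}(\tilde F)$ for some admissible $\phi_1$. Therefore $\bar F=T_{\vbeta,\phi}(F)=\bigl(T_{\vbeta,\phi}\circ T_{\vbeta,\phi_1}\bigr)(\tilde F)$, and Proposition~\ref{prop:compos}, applied with outer function $\phi$ and inner function $\phi_1$, yields $\bar F=T_{\vbeta,\phi_2}(\tilde F)$ with $\phi_2(s)=\phi_1(s)\,\phi\bigl(s/\phi_1(s)\bigr)$, which is precisely the stated relation between $\phi_1$, $\phi_2$ and $\phi$. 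Note that this part uses only that $\tilde F$ is a Minkowski norm, not its C-reducibility.

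The C-reducibility hypothesis is what pins down $\tilde F$. By the definition of semi-C-reducibility a C-reducible Minkowski norm has nonzero mean Cartan torsion, so the Matsumoto classification recalled in the Introduction, \cite[Theorem~2.2]{shsh2016}, applies to $\tilde F$ in $\RR^n$ with $n\ge3$ and forces it to be of Randers type ($\phi=1+s$) or of Kropina type ($\phi=1/s$). The Kropina alternative is excluded: a Kropina norm does not satisfy M$_1$--M$_3$ — it is not defined and smooth on all of $\RR^n\setminus\{0\}$ and its fundamental tensor is not positive definite (Example~\ref{Ex-01}) — whereas $\tilde F$ is a Minkowski norm by hypothesis. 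Hence $\tilde F$ is a Randers norm $\tilde\alpha+\tilde\beta$ with $\tilde\alpha$ Euclidean, and together with the preceding paragraph this proves the theorem.

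The step I expect to be the main obstacle is the admissibility of $\chi$, i.e. verifying \eqref{E-p-cond1} for $\chi$ so that Theorem~\ref{prop:inverse-o} can be invoked: a norm of the form $F\,\chi(\vbeta/F)$ need not a priori satisfy $\chi-\sum_i s_i\dot\chi_i>0$ at every attained value of $s$, so one must deduce this from positive definiteness of $\tilde g_y$ via \eqref{E-c-value0}, possibly together with a continuity/rescaling argument. By contrast, the Matsumoto step is immediate once one recalls that C-reducibility entails a nonzero mean Cartan torsion and that Kropina norms are not genuine Minkowski norms.
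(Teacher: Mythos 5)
Your proposal is correct and follows essentially the same route as the paper: write $\tilde F=F\chi(\vbeta/F)$ with $\chi=\sqrt{|\phi^2-1|}$, verify \eqref{E-p-cond1} for $\chi$, invert via Theorem~\ref{prop:inverse-o}, compose via Proposition~\ref{prop:compos} to get $\phi_2(s)=\phi_1(s)\phi(s/\phi_1(s))$, and invoke Matsumoto's classification while excluding the Kropina case. The only deviations are minor: the paper fixes the sign by assuming $\phi<1$, justifies \eqref{E-p-cond1} for $\psi=(1-\phi^2)^{1/2}$ by appealing to \eqref{E-cond2} (for small $s$) rather than to positive definiteness of $\tilde g_y$, and rules out Kropina via compactness of $S_{\tilde F}$ rather than via its failure of M$_1$--M$_3$.
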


\begin{proof}
Let $\phi<1$ (the case $\phi>1$ is similar). Then $\tilde F^2:=F^2-\bar F^2$ is a positive (on $\RR^n\setminus\{0\}$)
2-homogeneous function and the following equality is satisfied:
\begin{equation}\label{E-C-reduce}
 \tilde F^2 = F^2(1-\phi^2(\vbeta/F)) .
\end{equation}
Thus, $F^2\le (1/m) \tilde F^2$, where $m=\min\{1-\phi^2(\vbeta(y)):\,y\in S_F\}>0$, i.e., the indicatrix of $\tilde F$
is surrounded by the image of $S_F$ after $m$-homothety. Since $S_F$ is compact, $S_{\tilde F}$ is compact as well.
The proof of \cite[Theorem 2.2 (M.~Matsumoto)]{shsh2016} requires the inverse matrix for $g_y$ in \eqref{E-acsiom-M3},
which exists, for example, when $g_y$ is positive definite.
By~our assumptions, $\tilde F$ is a Minkowski norm with a C-reducible Cartan torsion.
Following \cite[Theorem~2.2]{shsh2016} and Remark~\ref{R-RanKrop} below, we find  that $\tilde F$ is a Randers norm.
(Here, we can exclude the case of a Kropina norm because its indicatrix is non-compact).
By~\eqref{E-C-reduce}, $\tilde F=T_{\vbeta,\psi}(F)$ with $\psi=(1-\phi^2)^{-1/2}$.
By \eqref{E-cond2} (which is always satisfied when $\phi\ne1$ and $s=\vbeta/F$ is ``small"), we find that
$\psi(s)$ satisfies \eqref{E-p-cond1}. By Theorem~\ref{prop:inverse-o}, there exists an inverse defor\-mation
$F=T_{\vbeta,\phi_1}(\tilde F)$ for some $\phi_1(s)$.
The~formula for $\bar F$ follows from Lemma~\ref{prop:compos}.
\end{proof}

\begin{remark}\label{R-tildeL}\rm
Given Minkowski norms $F$ and $\bar F=T_{\vbeta,\phi}(F)$ in $\RR^n\ (n\ge3)$,
one can find sufficient conditions for a 1-homogeneous function $\tilde F=|\bar F^2-F^2|^{1/2}$ to be a Minkowski norm.
Let $\phi<1$ (the case $\phi>1$ is similar). Set $\widetilde L=F^2\big(1-\phi^2(\beta_1/F, \ldots,\beta_k/F)\big)$.
 The~sufficient conditions for $\tilde g_y>0$ (of $\tilde F$) are, see Remark~\ref{R-2.1},

\smallskip
(a)	$\widetilde L_{,F} > 0$, \quad (b) ${\rm Hess}(\widetilde L) > 0$ (positive definite matrix).

\smallskip\noindent
We find $\widetilde L_{,F} = 2 F\big(1-\phi(\phi-\sum_{\,i} s_i\dot\phi_i)\big)$. Thus, the inequality $\widetilde L_{,F}>0$ is
reduced to
\[
 \phi(\phi-\sum\nolimits_{\,i} s_i\dot\phi_i) < 1,
\]
and is guaranteed by the following assumption for $\phi\ne1$:
\begin{equation}\label{E-cond2}
 (\phi-1)\big(\phi(\phi-\sum\nolimits_i s_i\,\dot\phi_i) -1\big)>0.
\end{equation}
Put $\psi=\phi^2$. Applying elementary transformations of matrices to ${\rm Hess}(\widetilde L)$,
where $\widetilde L=F^2-L$ and $L=F^2\phi^2$, and using Remark~\ref{R-2.1}, we find
\[
 {\rm Hess}(\widetilde L) \sim \widetilde\Psi =
 \left(
   \begin{array}{cccc}
     2\,(1-\psi) & -\dot\psi_1 & \cdots & -\dot\psi_p \\
     -\dot\psi_1 & -\ddot\psi_{11} & \cdots & \ddot\psi_{1p} \\
     \cdots & \cdots & \cdots & \cdots \\
     -\dot\psi_p & -\ddot\psi_{1p} & \cdots & -\ddot\psi_{pp} \\
   \end{array}
 \right),\quad
 \det{\rm Hess}(\widetilde L)=\det\widetilde\Psi.
\]
In our case, ${\rm Hess}(\widetilde L)>0$ is satisfied when $\det\widetilde\Psi>0$ and $-\psi$ is a convex function.
\end{remark}

\begin{remark}\label{R-RanKrop}\rm
Take a (single) Minkowski $(\alpha, \beta)$-norm $F$ with a C-reducible Cartan torsion~$C$. Extend it to the (unique) Minkowskian structure $\tilde F$  on the whole $\RR^n$. At each point $x\in\RR^n$, Cartan torsion $\tilde C(x)$ equals $C$
(up to the canonical isomorphism between $\RR^n$ and $T_x \RR^n$).
Therefore, $\tilde C$  is C-reducible and comes from a Randers or Kropina $\tilde F$.
Namely,
\[
 a(x)\tilde F^2+2(\beta_i(x)\,y^i) \tilde F +a_{ij}(x) y^i y^j = 0,
\]
and $\tilde F$ is Randers when $a(x)\ne0$, while $\tilde F$ is Kropina for $a(x)=0$.
In particular, $F$ itself is of Randers or Kropina type.
Thus, result \cite[Theorem~2.2 (M.~Matsumoto)]{shsh2016} is ``pointwise".
\end{remark}

\begin{corollary}
Let $F$ and $\bar F=T_{\vbeta,\phi}(F)$ be Minkowski norms in $\RR^n\ (n\ge2)$, where $\phi$ is a smooth positive function of $p$ variables and $\vbeta=(\beta_1,\ldots,\beta_p)$ is a set of linear independent 1-forms.
If~Cartan torsions of $\bar F$ and $F$ coincide and \eqref{E-cond2} is satisfied then
$\phi(\vbeta/F)\ne1$, $F=T_{\vbeta,\phi_1}(\alpha)$ and $\bar F = T_{\vbeta,\phi_2}(\alpha)$ with the same given $\vbeta$
and some $\phi_1$ and $\phi_2(s)=\phi_1(s)\phi(\frac s{\phi_1(s)})$, where $\alpha$ is a Euclidean norm.
\end{corollary}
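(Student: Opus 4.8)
The plan is to realize $F$ as an $(\alpha,\vbeta)$-norm and then read off $\bar F$ from Proposition~\ref{prop:compos}, exactly as in the proof of Theorem~\ref{T-03}; the extra input here is that equality of Cartan torsions forces the auxiliary norm to be flat rather than merely C-reducible. First I would note that wherever \eqref{E-cond2} holds its first factor $\phi-1$ cannot vanish, so $\phi(\vbeta/F)\ne1$, which is the first assertion. Since $n\ge2$, the set $S:=\{\vbeta(y)/F(y):y\in\RR^n\setminus\{0\}\}$ is connected and compact (it equals the image of the $F$-unit sphere under the $0$-homogeneous map $y\mapsto\vbeta(y)/F(y)$), hence $\phi<1$ throughout $S$ or $\phi>1$ throughout $S$; assume $\phi<1$ (the other case is analogous, with $\widetilde L:=\bar F^2-F^2$ and $\psi:=(\phi^2-1)^{1/2}$). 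Then, as in the proof of Theorem~\ref{T-03}, $\widetilde L:=F^2-\bar F^2=F^2\bigl(1-\phi^2(\vbeta/F)\bigr)$ is smooth and $2$-homogeneous with $\widetilde L\ge mF^2>0$ on $\RR^n\setminus\{0\}$, where $m=\min_S(1-\phi^2)>0$, so $\tilde F:=\widetilde L^{1/2}=F\,\psi(\vbeta/F)$ with $\psi:=(1-\phi^2)^{1/2}>0$.

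Next I would use the hypothesis $\bar C_y=C_y$. Since $\tfrac14[\widetilde L]_{y^iy^jy^k}=C_{ijk}-\bar C_{ijk}=0$ by assumption, the symmetric form $\tilde g_{ij}:=\tfrac12[\widetilde L]_{y^iy^j}=g_{ij}-\bar g_{ij}$ is independent of $y$; and because $\widetilde L$ is $2$-homogeneous, $\tilde g_{ij}y^iy^j=\widetilde L=\tilde F^2(y)>0$ for every $y\ne0$, so this constant symmetric form $\tilde g$ is positive definite. Hence $\tilde F$ is a Minkowski norm — in fact a Euclidean norm, which I rename $\alpha$ — and $\tilde F=T_{\vbeta,\psi}(F)$ is a legitimate $T_{\vbeta,\psi}$-deformation.

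It remains to run the inversion and composition. I would first verify that $\psi$ satisfies \eqref{E-p-cond1}: differentiating $\psi^2=1-\phi^2$ gives $\psi\,\dot\psi_i=-\phi\,\dot\phi_i$, whence $\psi-\sum_i s_i\dot\psi_i=\psi^{-1}\bigl(1-\phi(\phi-\sum_i s_i\dot\phi_i)\bigr)$, which is positive precisely because \eqref{E-cond2} with $\phi<1$ forces $\phi(\phi-\sum_i s_i\dot\phi_i)<1$ (in the case $\phi>1$ one gets $\psi^{-1}\bigl(\phi(\phi-\sum_i s_i\dot\phi_i)-1\bigr)>0$ instead). Then Theorem~\ref{prop:inverse-o}, applied to $\tilde F=T_{\vbeta,\psi}(F)$, produces an inverse $T_{\vbeta,\phi_1}$-deformation with $F=T_{\vbeta,\phi_1}(\tilde F)=T_{\vbeta,\phi_1}(\alpha)$, and composing with $\bar F=T_{\vbeta,\phi}(F)$ and applying Proposition~\ref{prop:compos} yields $\bar F=T_{\vbeta,\phi_2}(\alpha)$ with $\phi_2(s)=\phi_1(s)\,\phi\bigl(s/\phi_1(s)\bigr)$, as claimed. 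The step I expect to require the most care is the middle one — ensuring that vanishing of the Cartan tensor of $\widetilde L$ really makes $\tilde F$ a bona fide Minkowski (indeed Euclidean) norm, so that it can be written as $T_{\vbeta,\psi}(F)$ and fed into Theorem~\ref{prop:inverse-o}; for $n\ge3$ this could alternatively be deduced from Theorem~\ref{T-03} by noting that a Randers norm with vanishing Cartan torsion has zero $1$-form and hence is Euclidean, but the homogeneity argument above avoids Matsumoto's theorem and also covers $n=2$.
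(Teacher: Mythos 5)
Your proof is correct and follows essentially the same route as the paper's: both observe that the coincidence of Cartan torsions makes $\tilde F^2=\pm(F^2-\bar F^2)$ a (definite) quadratic form, identify $\alpha=T_{\vbeta,\psi}(F)$ with $\psi=(1-\phi^2)^{1/2}$, check \eqref{E-p-cond1} for $\psi$ from \eqref{E-cond2}, and conclude via Theorem~\ref{prop:inverse-o} and Proposition~\ref{prop:compos}. The only (minor, valid) divergences are that you obtain positive definiteness directly from Euler's relation $\tilde g_{ij}y^iy^j=\widetilde L>0$ for the constant Hessian rather than from compactness of the indicatrix of $\tilde F$ as the paper does, you make explicit the connectedness argument fixing the sign of $\phi-1$, and you use the correct exponent $+1/2$ where the paper misprints $\psi=(1-\phi^2)^{-1/2}$.
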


\proof
By conditions, $\tilde F^2:=F^2-\bar F^2$ is a 2-homogeneous function with vanishing Cartan torsion.
By definition of Cartan torsion, $\tilde F^2$ is a quadratic form on $\RR^n$.
As in the proof of Theorem~\ref{T-03}, we show that indicatrix of $\tilde F$ is compact.
Hence, the quadratic form $\tilde F^2$ is either positive or negative definite:
$\tilde F^2=\pm\alpha^2$, where $\alpha$ is a Euclidean norm in $\RR^n$.
 If $\tilde F=\alpha$ then $\phi(\vbeta/F)<1$ and
\begin{equation}\label{E-C-0}
 1-\phi^2(\vbeta/F) = \alpha^2/F^2,
\end{equation}
and similarly for $\tilde F=-\alpha$.
By \eqref{E-C-0}, $\alpha = T_{\vbeta,\psi}(F)$ with $\psi=(1-\phi^2)^{-1/2}$.
By \eqref{E-cond2}, function $\psi(s)$ satisfies \eqref{E-p-cond1}, see the proof of Theorem~\ref{T-03}.
By Theorem~\ref{prop:inverse-o}, there exists inverse transformation
$F=T_{\vbeta,\phi_1}(\alpha)$ for some $\phi_1(s)$.
The~formula for $\bar F$ follows from Proposition~\ref{prop:compos}.
\hfill$\Box$

\subsection{Case of $p=1$}

The next proposition shows that  any two Euclidean norms in $\RR^n$ are equivalent.
One can take Cartesian coordinates for the first Euclidean norm such that $\alpha^2(y)=\sum\nolimits_{i=1}^n (y_i)^2$
and the indicatrix of the
second norm is an ellipsoid given by
$\sum\nolimits_{i=1}^n d_i^2(y_i)^2=1$.

\begin{proposition}
Any two Euclidean norms, $\bar\alpha$ and $\alpha$,
in $\RR^n$
are $\overset{1}\sim$ equivalent.
Moreover,
we have $\bar\alpha = T_{\vbeta,\phi}(\alpha)$
using one transformation with $\vbeta$ of length $n$.
\end{proposition}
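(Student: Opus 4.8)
The plan is to reduce the statement to a finite sequence of one-variable, one-form deformations that one can build explicitly. Fix Cartesian coordinates so that $\alpha^2(y)=\sum_{i=1}^n (y_i)^2$ and $\bar\alpha^2(y)=\sum_{i=1}^n d_i^2(y_i)^2$ with all $d_i>0$; this is possible because two positive-definite quadratic forms can be simultaneously diagonalized (one via an orthogonal change of basis, the other by rescaling). Since Proposition~\ref{Prop-equivalence2} already gives invariance of $[\alpha]_1$ under orthogonal transformations and homotheties, it suffices to move $\alpha$ to $\bar\alpha$ by a sequence of $T_{\beta,\phi}$-deformations (each with a single $1$-form).

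The key observation is the elementary identity, for a single coordinate direction, that rescaling one axis of a Euclidean norm is a $T_{\beta,\phi}$-deformation. Concretely, I would show that if $\beta=c\,dy_k$ for a constant $c$ with $|c|<1$ (say), then choosing $\phi$ appropriately one obtains $\bar F^2 = \alpha^2 + (d_k^2-1)(y_k)^2$, i.e. the norm whose indicatrix is the ellipsoid with the $k$-th semiaxis stretched to $1/d_k$ and all others unchanged. This is exactly the computation already illustrated in Example~4 of the excerpt (the shifted-sphere computation), adapted to a diagonal stretch rather than a translation: writing $s=\beta/F$ and imposing the defining equation $F\,\phi(\beta/F)=\bar F$ on the stretched indicatrix yields an explicit algebraic $\phi(s)=\bigl(1-(1-d_k^2)s^2/c^2\bigr)^{-1/2}$ (up to normalization of $c$), and one checks that this $\phi$ satisfies the positivity/convexity conditions \eqref{E-phi-cond} for $|c|$ small enough, so $T_{\beta,\phi}$ is a genuine Minkowski-norm deformation by the Corollary following Proposition~\ref{P-2.5}. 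Here the $F$-norm of $\beta=c\,dy_k$ equals $|c|$, so the hypothesis $F(\beta)<b_0$ of that Corollary is met by shrinking $c$.

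Then I would iterate: apply such a deformation with $\beta_1=c\,dy_1$ to turn $\alpha$ into the norm with semiaxes $(1/d_1,1,\dots,1)$; apply another with $\beta_2=c\,dy_2$ (now $F$ is no longer Euclidean, but the Corollary after Proposition~\ref{P-2.5} works for an arbitrary Minkowski norm $F$, which is the point of generalizing \cite[Lemma~2]{shsh2016}), and so on; after $n$ steps one reaches $\bar\alpha$. Each step's $1$-form $\beta_i=c\,dy_i$ is independent of the others, so the composition, rewritten via Proposition~\ref{prop:compos} (applied successively, or rather its multi-form analogue), is a single $T_{\vbeta,\phi}$-deformation with $\vbeta=(\beta_1,\dots,\beta_n)$ of length $n$ and $\phi$ a function of $n$ variables; this gives the ``moreover'' clause. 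The first conclusion $\bar\alpha\overset{1}\sim\alpha$ is immediate since each step is a $T_{\beta,\phi}$-deformation with a single $1$-form.

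The main obstacle is checking admissibility of each intermediate deformation: after the first step $F$ is an $(\alpha,\beta)$-norm (not Euclidean), and one must verify that the explicit $\phi$ for the next axis-stretch still satisfies \eqref{E-phi-cond}, or equivalently that the ellipsoid with some axes already stretched and the next one being stretched is $\mathrm{Hess}$-positive — this is where the constant $c$ must be chosen small (the $F$-norm of $c\,dy_k$ on the current indicatrix must be controlled), and one invokes the uniform dependence in Proposition~\ref{P-03}. A minor subtlety is that the explicit $\phi$ depends on the current $F$, so strictly one should present the argument as: the target is a fixed ellipsoid, at each stage solve for $\phi$ from the defining equation, and observe that the solution is smooth and satisfies the required inequalities because the ratio of consecutive nested ellipsoids is uniformly bounded. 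Assembling the $n$ steps into one $T_{\vbeta,\phi}$ is then bookkeeping via Proposition~\ref{prop:compos}.
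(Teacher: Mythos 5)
Your proposal is essentially the paper's own proof: simultaneously diagonalize the two quadratic forms, stretch one coordinate axis at a time by a $T_{\beta,\phi}$-deformation with $\beta$ proportional to $dy_k$, and assemble the $n$ steps (equivalently, use the single identity $\bar\alpha^2=\alpha^2\bigl(1-\sum_{i}s_i^2\bigr)$ with $s_i=\beta_i/\alpha$, $\beta_i=(1-d_i^2)^{1/2}dy_i$) to get the length-$n$ claim. One slip: the $\phi$ realizing $\bar F^2=F^2+(d_k^2-1)(y_k)^2$ is $\phi(s)=\bigl(1-(1-d_k^2)s^2/c^2\bigr)^{1/2}$ --- the paper takes $c=(1-d_k^2)^{1/2}$, i.e.\ $\phi(s)=\sqrt{1-s^2}$ --- not the inverse square root you wrote, which would produce a non-quadratic $\bar F^2$.
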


\proof
By~Proposition~\ref{Prop-equivalence2} (with homotheties), we can assume
$\alpha(y)=\sum\nolimits_{i=1}^n (y_i)^2$ and $\bar\alpha(y)=\sum\nolimits_{i=1}^n d_i^2(y_i)^2$ with all $d_i\in(0,1)$.
Taking $\phi(s)=\sqrt{1 - s^2}$ and $\beta=(1 - d_1^2)^{1/2}dy_1$,
we transform the unit sphere in $(\RR^n,\alpha)$ into the ellipsoid of axes $(1/d_1, 1,\ldots, 1)$.
Then, take $\beta_1 = (1- d_2^2)^{1/2}dy_2$ and the same $\phi$ as before. Then,
the corresponding $(\phi,\beta)$-transformation  maps the previous ellipsoid into the one of axes $(1/d_1, 1/ d_2, 1, \ldots, 1)$.
Iteration of this procedure leads us towards the ellipsoid of axes $(1/d_1, \ldots, 1/d_n)$.
To show the second claim, observe that $\bar\alpha^2 = \alpha^2\,(1 - \sum_{i=1}^n s_i^2)$, where $s_i=\beta_i/\alpha$ and $\beta_i=(1-d_i^2)^{1/2}dy_i$.
\hfill$\Box$

\begin{proposition}\label{C-3.5}
If a Minkowski norm $F$ in $\RR^n$ can be $T_{\beta,\phi}$-deformed to the Euclidean norm $\alpha$
then
the Cartan torsion of $F$ is semi-C-reducible.
\end{proposition}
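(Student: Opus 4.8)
The plan is to realise $F$ as an $(\alpha,\beta)$-norm and then invoke Matsumoto's theorem. First I would read the hypothesis as $\alpha=T_{\beta,\phi}(F)=F\,\phi(\beta/F)$ for some $1$-form $\beta$ and some admissible $\phi$ (so the standing assumption \eqref{E-p-cond1} is in force). By Theorem~\ref{prop:inverse-o} this deformation has an inverse $T_{\beta,\psi}$-deformation carrying $\alpha$ back to $F$, i.e. $F=\alpha\,\psi(\beta/\alpha)=T_{\beta,\psi}(\alpha)$; by Example~\ref{Ex-01} this exhibits $F$ as an $(\alpha,\beta)$-norm. Then I would quote \cite[Proposition~5]{ma92}, already recalled in Section~1: an $(\alpha,\beta)$-norm with nonvanishing mean Cartan torsion is semi-C-reducible. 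If the mean Cartan torsion $I$ of $F$ vanishes identically, then $F$ is a Euclidean norm (vanishing of $I$ characterises Euclidean norms among Minkowski norms, see Section~1), so its Cartan torsion is zero and the assertion is trivial; otherwise we are done.

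If one prefers a self-contained argument, the plan is to compute the Cartan torsion of $F=T_{\beta,\psi}(\alpha)$ directly from \eqref{E-c-value0-2}. Since the base norm $\alpha$ has zero Cartan torsion, \eqref{E-c-value0-2} collapses to
\begin{align*}
 2\,C^{F}_y(u,v,w) &= \frac{3\dot\psi\,\ddot\psi+\psi\,\dddot\psi}{\alpha(y)}\,g_y(p_y,u)\,g_y(p_y,v)\,g_y(p_y,w) \\
 &\quad +\frac{\rho_1}{\alpha(y)}\big(K_y(u,v)\,g_y(p_y,w)+K_y(v,w)\,g_y(p_y,u)+K_y(w,u)\,g_y(p_y,v)\big),
\end{align*}
where $g_y,K_y$ are the data of $\alpha$ and $p_y,\rho_1$ are formed from $\beta$ and $\psi$. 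As noted in the remark following \eqref{E-Ctorsion2} for the case $p=1$, the right-hand side is built from the symmetric product of the angular metric with the covector $\iota_{p_y}g_y$ and the symmetric cube of $\iota_{p_y}g_y$. To bring it to the exact form \eqref{E-semi-C-r} I would: (i) observe that the mean Cartan torsion of $F$ is a scalar multiple of $\iota_{p_y}g_y$ (so the $C_iC_jC_k$-term is the cube above, and $\eps(C)^2\ne0$ exactly when $I\not\equiv0$); (ii) rewrite $K_y$, the angular metric of $\alpha$, in terms of the angular metric $\bar K_y$ of $F$ using the relation \eqref{E-c-value0-1} between $\bar g_y$ and $g_y$, which differs only by a multiple of $\iota_{p_y}g_y\otimes\iota_{p_y}g_y$ and is therefore absorbed into the cube term; and (iii) collect coefficients to read off the scalar $p$ in \eqref{E-semi-C-r}.

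I expect step (iii) — matching the coefficient of the $\bar K$-symmetric-product term against $p/(n+1)$, and hence isolating $p$ — to be the only real computation; everything before it is substitution. The degenerate case $I\equiv0$ (in which $F$ is Euclidean and the normalisation in \eqref{E-semi-C-r} is not even defined) has to be singled out, as above. Citing \cite[Proposition~5]{ma92} sidesteps the whole bookkeeping, which is why I would present the short argument of the first paragraph as the main proof and relegate the direct computation to a remark.
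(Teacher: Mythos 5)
Your proposal is correct, and it in fact contains the paper's own proof as your ``remark'': the paper's argument is exactly the one-liner of inverting the deformation via Theorem~\ref{prop:inverse-o} to write $F=T_{\beta,\psi}(\alpha)$ and then reading the semi-C-reducible shape of the Cartan torsion of $F$ directly off \eqref{E-c-value0-2} with the Cartan torsion of $\alpha$ set to zero. Your primary route replaces that second step by a citation of \cite[Proposition~5]{ma92}, which the paper itself recalls in Section~1; this is logically equivalent but outsources the bookkeeping that your steps (i)--(iii) would otherwise have to carry out, namely identifying the mean Cartan torsion of $F$ with a multiple of $\iota_{p_y}g_y$, converting the angular metric of $\alpha$ into that of $F$ (note this conversion also involves the scalar factor $\rho$, not only the rank-one correction, though this is harmlessly absorbed into the coefficients), and normalising by $\eps(C)^2$. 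The one point where you are more careful than the paper is the degenerate case of vanishing mean Cartan torsion, where the normalisation in \eqref{E-semi-C-r} is undefined: there $F$ is Euclidean, its Cartan torsion vanishes, and the claim is vacuous; the paper's proof passes over this silently. Both routes rest on the same key input, Theorem~\ref{prop:inverse-o}, so the difference is one of packaging rather than substance.
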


\proof This follows from \eqref{E-c-value0-2} and Theorem~\ref{prop:inverse-o}.
\hfill$\Box$

\smallskip

By Proposition~\ref{C-3.3} with $p=1$, any $(\alpha,\beta)$-norm can be deformed into a Euclidean norm
by one $T_{\beta,\phi}$-deformation.
Thus, by Proposition~\ref{C-3.5}, $(\alpha,\beta)$-norms are semi-C-reducible.

\begin{example}\rm
Given $\phi(s)$ and 1-form $\beta$ in $\RR^n$,
one can study the iterations of $T_{\beta,\phi}$-transformati\-on of the space
${\rm Mink}^n$
equipped with, say, the Hausdorff distance $d_H$.
These iterations define a dynamical system in this metric space $({\rm Mink}^n, d_H)$.
One could try to study its dynamics: fixed or periodic points, limit sets, etc.
 Let $F_1=F\phi(\beta/F)$, $F_2=F_1\phi(\beta/{F_1})$ and so on.
Then $F_2=F\psi_1(\beta/F)$, where $\psi_1(s)=\phi(s)\phi(\frac{s}{\phi(s)})$ and $s=\beta/F$,
see Proposition~\ref{prop:compos}, and so on. Notice~that
\[
 F\psi_{k+1}(\beta/F) =
 F_{k+1}= F\psi_k\Big(\beta/F)\phi\Big(\frac\beta{F\psi_k(\beta/F)}\Big).
\]
The functions $\psi_k(s)$ satisfy the following recurrence relation:
 $\psi_{k+1}(s)=\psi_k(s)\phi\big({s}/{\psi_k(s)}\big)$.
If there exists a positive function $\psi_\infty=\lim_{\,k\to\infty}\psi_k$
then it is unique and $\phi(\frac{s}{\psi_\infty(s)})=1$.
The~solu\-tion is $\psi_\infty=s/s_0$, where $\phi(s_0)=1$.
Then
 $F_\infty=F\psi_\infty(\beta/F) = \beta/{s_0}$
-- the indicatrix $F_k = 1$ converges to the hyperplane $\beta = s_0$.
The
$F_\infty$-``norm" is highly singular: all the $g_y$'s are identically~zero.

For Randers deformation, with $\phi(s) = 1+s$, we get  $F_k = F + k\beta$,
so $F_k$ stops to be ``true Minkowski norm" for $k$ larger than $1/F(\beta)$, and, therefore, further iterations make no sense.

For Kropina deformation, $\phi (s) = 1/s$, we get
$\psi_k(s) = s^{1 - 2^{k+1}}$, therefore, $\psi_\infty(s)  = 0$ for $s> 1$,
$\psi_\infty(1) = 1$, and $\psi_\infty (s) = + \infty$ for $0< s <1$, the function is not positive.
\end{example}




\begin{thebibliography}{}

\bibitem{gr}
M. Gromov, \textit{Metric Structures for Riemannian and non-Riemannian Spaces}, Birkh{\"a}user, 1999.

\bibitem{sh1984}
C. Shibata, On invariant tensors of $\beta$-changes of Finsler metrics, J. Math. Kyoto Univ. 24 (1984), 163--188.

\bibitem{js1}
M.A. Javaloyes and M. S\'{a}nchez, On the definition and examples of Finsler metrics. Ann. Sc. Norm. Super. Pisa Cl. Sci. 13(3), 813--858 (2014).

\bibitem{ma92}
M. Matsumoto, {Theory of Finsler spaces with $(\alpha,\beta)$-metric}. Reports on mathematical physics, 31(1) (1992), 43--83.

\bibitem{r-fin3}
V. Rovenski, The new Minkowski norm and integral formulae for a manifold with a set of one-forms,
Balkan J. of Geometry and Its Applications, {23}, No. 1 (2018), 75--99.

\bibitem{shsh2016}
Y.-B. Shen and Z. Shen, \textit{Introduction to modern Finsler geometry},
World Scientific, 2016.

\bibitem{thom}
A.C. Thompson,  \textit{Minkowski Geometry}, Encyclopedia of Mathematics and its Applications,
63, Cambridge University Press, New York, 1996.

\end{thebibliography}
\end{document}